\documentclass[11pt,a4paper]{amsart}

\usepackage{amsmath}
\usepackage{amsthm}
\usepackage{mathtools}
\usepackage{amssymb}
\usepackage{hyperref}

\usepackage{tikz}
\usepackage{tikz-cd}
\usetikzlibrary{shapes.geometric,shapes.symbols}
\tikzset{
  arrowlabel/.style={auto,inner sep=1mm,font=\footnotesize},
  confluencediagram/.style={x=15mm,y=15mm,line width=.6pt},
}

\theoremstyle{definition}
\newtheorem{definition}{Definition}[section]

\theoremstyle{plain}
\newtheorem{proposition}[definition]{Proposition}
\newtheorem{lemma}[definition]{Lemma}
\newtheorem{theorem}[definition]{Theorem}
\newtheorem{corollary}[definition]{Corollary}

\numberwithin{equation}{section}

\def\fullref#1#2{%
  \ifdefined\hyperref%
    {\hyperref[#2]{#1\space\penalty 200\relax\ref*{#2}}}%
  \else%
    {#1\space\penalty 200\relax\ref{#2}}%
  \fi%
}


\newcommand{\defterm}[1]{\textit{#1}}


\newcommand{\ch}{ch.~}


\hyphenation{mon-oid}
\hyphenation{mon-oids}


\newcommand{\nset}{\mathbb{N}}


\DeclareMathOperator{\im}{im}


\newcommand{\pres}[2]{\left\langle #1\:|\:#2 \right\rangle}

\newcommand{\drel}[1]{\mathcal{#1}}


\newcommand{\emptyword}{\varepsilon}

\newcommand{\tm}[1]{\mathfrak{#1}}

\newcommand{\lenlex}{\mathrm{lenlex}}

\newcommand{\imreduces}{\rightarrow}
\newcommand{\reduces}{\rightarrow^*}

\newcommand{\thue}{\leftrightarrow^*}

\newcommand{\bst}{\mathcal{BST}}
\newcommand{\lrp}{{LRP}}
\newcommand{\tree}[1]{\mathcal{#1}}
\newcommand{\tmcomponestep}{\vdash}
\newcommand{\tmcomp}{\vdash^*}

\begin{document}

\title[Deciding conjugacy]{Deciding conjugacy in sylvester monoids and other homogeneous monoids}

\author[A.J. Cain]{Alan J. Cain}
\address{%
Centro de Matem\'{a}tica e Aplica\c{c}\~{o}es (CMA)\\
Faculdade de Ci\^{e}ncias e Tecnologia\\
Universidade Nova de Lisboa\\
2829--516 Caparica\\
Portugal
}
\address{%
Departamento de Matem\'{a}tica\\
Faculdade de Ci\^{e}ncias e Tecnologia\\
Universidade Nova de Lisboa\\
2829--516 Caparica\\
Portugal
}
\email{%
a.cain@fct.unl.pt
}

\thanks{During the research that led to this paper, the first author was initially supported by the European Regional
Development Fund through the programme {\sc COMPETE} and by the Portuguese Government through the {\sc FCT}
(Funda\c{c}\~{a}o para a Ci\^{e}ncia e a Tecnologia) under the project {\sc PEst-C}/{\sc MAT}/{\sc UI}0144/2011 and
through an {\sc FCT} Ci\^{e}ncia 2008 fellowship, and later supported by an {\sc FCT} Investigador advanced fellowship
({\sc IF}/01622/2013/{\sc CP}1161/{\sc CT}0001).}

\author[A. Malheiro]{Ant\'{o}nio Malheiro}
\address{%
Centro de Matem\'{a}tica e Aplica\c{c}\~{o}es (CMA)\\
Faculdade de Ci\^{e}ncias e Tecnologia\\
Universidade Nova de Lisboa\\
2829--516 Caparica\\
Portugal
}
\address{%
Departamento de Matem\'{a}tica\\
Faculdade de Ci\^{e}ncias e Tecnologia\\
Universidade Nova de Lisboa\\
2829--516 Caparica\\
Portugal
}
\address{%
Centro de \'{A}lgebra da Universidade de Lisboa\\
Av. Prof. Gama Pinto 2\\
1649--003 Lisboa\\
Portugal
}
\email{%
ajm@fct.unl.pt
}

\thanks{For the second author, this work was developed within the project {\sc
  PEst-OE}/{\sc MAT}/{\sc UI}0143/2014 of CAUL, FCUL.}
\thanks{This work was partially supported by by the Funda\c{c}\~{a}o para
a Ci\^{e}ncia e a Tecnologia (Portuguese Foundation for Science and Technology) through the project {\sc UID}/{\sc
  MAT}/00297/2013 (Centro de Matem\'{a}tica e Aplica\c{c}\~{o}es). The authors thank the anonymous referee for their
very careful reading of the paper and for many helpful comments and suggestions.}

\begin{abstract}
  We give a combinatorial characterization of conjugacy in the sylvester monoid (the monoid of binary search trees),
  showing that conjugacy is decidable for this monoid. We then prove that conjugacy is undecidable in general for
  homogeneous monoids and even for multihomogeneous monoids.
\end{abstract}

\keywords{Conjugacy; decidability; homogeneous monoid; sylvester monoid}

\subjclass[2010]{20M10 (Primary) 05C05 03D10 20M05 (Secondary)}

\maketitle

\section{Introduction}

The notion of conjugation, which is extremely natural for groups, can
be generalized to semigroups in more than one way. Two elements $x$
and $y$ of a group $G$ are conjugate (in $G$) if there exists $g \in
G$ such that $y = g^{-1}xg$. A first attempt, due to
Lallement~\cite{lallement_semigroups}, to generalize this definition
to a semigroup $S$ is to define the \defterm{$\ell$-conjugacy} relation
$\sim_\ell$ by
\begin{equation}
\label{eq:lconjdef}
x \sim_\ell y \iff (\exists g \in S^1)(xg = gy).
\end{equation}
(where $S^1$ denotes the semigroup $S$ with an identity adjoined). It is easy to prove that $\sim_\ell$ is reflexive and
transitive, but not symmetric. A symmetric analogy, introduced by Otto~\cite{otto_conjugacy}, is \defterm{$o$-conjugacy}, defined
by
\begin{equation}
\label{eq:oconjdef}
x \sim_o y \iff (\exists g,h \in S^1)(xg = gy \land hx = yh).
\end{equation}
The relation $\sim_o$ is an equivalence relation. When the semigroup
$S$ contains a zero $0_S$, both $\sim_\ell$ and $\sim_o$ are the
universal relation $S \times S$, because taking $g = h = 0_S$ in
\eqref{eq:lconjdef} and \eqref{eq:oconjdef} shows that all elements of
$S$ are $\sim_\ell$- and $\sim_o$-related. This motivated Ara\'{u}jo,
Konieczny, and the second author~\cite{araujo_conjugation} to
introduce the $c$-conjugacy relation $\sim_c$, which is not simply the
universal relation when $S$ has a zero, but is genuinely useful. In
semigroups that do not contain zeroes, including the semigroups and
monoids we consider in this paper, $\sim_c$ and $\sim_o$ coincide, and
so we will not give the full definition of $\sim_c$.


Another approach is to define the \defterm{primary conjugacy} relation
$\sim_p$ by
\[
x \sim_p y \iff (\exists u,v \in S^1)(x = uv \land y = vu).
\]
However, $\sim_p$ is reflexive and symmetric, but not transitive; hence it is sensible to follow Kudryavtseva \&
Mazorchuk \cite{kudryavtseva_three,kudryavtseva_conjugation} in working with its transitive closure $\sim_p^*$. It is
easy to show that ${\sim_p} \subseteq {\sim_p^*} \subseteq {\sim_o} \subseteq {\sim_\ell}$. In some circumstances,
equality holds. For instance, in the free inverse monoid, ${\sim_p^*}$ and ${\sim_o}$ coincide when restricted to
non-idempotents \cite{choffrut_conjugacy}; this result has been extended to inverse monoids presented by a single
defining relation that has the form of a Dyck word \cite{silva_conjugacy}.

This paper is concerned with conjugacy in homogeneous monoids (that is, monoids with presentations where the two sides
of each defining relation have the same length) and multihomogeneous monoids (that is, monoids with presentations where
the two side of each defining relation contain the same number of each generator; see \fullref{\S}{sec:prelim} for the
formal definitions). Homogeneous monoids do not contain zeroes, so $\sim_c$ and $\sim_o$ coincide. The Plactic monoid
\cite[\ch 7]{lothaire_algebraic} and Chinese monoid \cite{cassaigne_chinese}, which are multihomogeneous monoids with
important connections to combinatorics and algebra, both have elegant combinatorial characterizations of conjugacy: in
both monoids, the relations ${\sim_p^*}$ and ${\sim_o}$ coincide, and two elements, expressed as words over the usual
set of generators, are conjugate (that is, ${\sim_p^*}$- and ${\sim_o}$-related) if and only if each generator appears
the same number of times in each word; see \cite[\S~4]{lascoux_plaxique} and \cite[Theorem~5.1]{cassaigne_chinese}.

Our first main result shows that the same characterization of
conjugacy holds for the sylvester monoid
(\fullref{Theorem}{thm:sylvestermonoidocongchar}). The sylvester monoid was
defined by Hivert, Novelli \& Thibon~\cite{hivert_algebra} as an
analogue of the plactic monoid where Schensted's algorithm for
insertion into Young tableaux (see \cite[ch.~5]{lothaire_algebraic})
is replaced by insertion into a binary search tree; from the sylvester
monoid, one then recovers the Hopf algebra of planar binary trees
defined by Loday \& Ronco \cite{loday_hopf}. Recently, the authors and
Gray proved that the standard presentations for the finite-rank
versions of sylvester monoids form (infinite) complete rewriting
systems, and that finite-rank sylvester monoids are biautomatic
\cite[\S~5]{cgm_chineseetc}.

We then prove that there exist homogeneous monoids where the problem
of deciding $o$-conjugacy is undecidable
(\fullref{Theorem}{thm:oconjundechomogeneous}). This strengthens a
result of Narendran \& Otto showing that $o$-conjugacy is undecidable
in general for monoids presented by finite complete rewriting systems
\cite[Lemma~3.6]{narendran_problems} and by almost-confluent rewriting
systems \cite[Theorem~3.4]{narendran_complexity}. (Homogeneous
presentations form almost-confluent rewriting systems
\cite[Proposition~3.2]{narendran_complexity}.) We then apply a
technique the authors developed with Gray \cite{cgm_homogeneous} to
deduce that there are multihomogeneous monoids in which the
$o$-conjugacy problem is undecidable
(\fullref{Theorem}{thm:oconjundecmultihomogeneous}).

\section{Preliminaries}
\label{sec:prelim}

\subsection{Homogeneous presentations and monoids}
\label{subsec:homogeneous}

Let $M$ be a monoid presented by $\pres{A}{\drel{R}}$. If $w,w' \in A^*$ represent the same element of $M$, we write $w
=_M w'$.

A monoid presentation $\pres{A}{\drel{R}}$ is \defterm{homogeneous} if
$|u| = |v|$ for all $(u,v) \in \drel{R}$. A monoid is
\defterm{homogeneous} if it can be defined by a homogeneous
presentation.


For a word $u \in A^*$ and a symbol $a \in A$, the number of symbols
$a$ in $u$ is denoted $|u|_a$. The \defterm{content} of $u \in A^*$ is
the function $a \mapsto |u|_a$. Two words $u,v \in A^*$ therefore have
the same content if $|u|_a = |v|_a$ for all $a \in A$. A presentation
$\pres{A}{\drel{R}}$ is \defterm{multihomogeneous} if $u$ and $v$ have
the same content for all $(u,v) \in \drel{R}$. A monoid is
\defterm{multihomogeneous} if it can be defined by a multihomogeneous
presentation. A multihomogeneous presentation is necessarily
homogeneous, and thus a multihomogeneous monoid is necessarily
homogeneous.

Let $M = \pres{A}{\drel{R}}$ be a homogeneous monoid and let $x \in
A^*$. Since the two sides of every defining relation in $\drel{R}$
have the same length, applying a defining relation does not alter the
length of a word. Thus if $y \in A^*$ and $y =_M x$, then $|y| =
|x|$.

Furthermore, if $A$ is finite, we can effectively compute the set $W_x$ of all words in $A^*$ equal to $x$ in $M$, since this
set only contains words that are the same length as $x$. In particular, this means that the word problem is soluble for
finitely generated homogeneous monoids: to decide whether $x$ and $y$ are equal in $M$, simply compute $W_x$ and check
whether $y$ is contained in $W_x$.

\subsection{Rewriting systems}

In this subsection, we recall the basic properties of string rewriting
systems needed for this paper. Fort further background reading, see
\cite{book_srs}.

A \defterm{string rewriting system}, or simply a \defterm{rewriting
  system}, is a pair $(A,\drel{R})$, where $A$ is a finite alphabet and
$\drel{R}$ is a set of pairs $(\ell,r)$, often written $\ell \imreduces
r$, known as \defterm{rewriting rules}, drawn from $A^* \times
A^*$. The single reduction relation $\imreduces$ is defined
as follows: $u \imreduces_{\drel{R}} v$ (where $u,v \in A^*$) if there
exists a rewriting rule $(\ell,r) \in \drel{R}$ and words $x,y \in A^*$
such that $u = x\ell y$ and $v = xry$. The reduction relation
$\reduces$ is the reflexive and transitive closure of
$\imreduces$. A word $w \in A^*$ is \defterm{reducible} if
it contains a subword $\ell$ that forms the left-hand side of a
rewriting rule in $\drel{R}$; it is otherwise called
\defterm{irreducible}.

The string rewriting system $(A,\drel{R})$ is \defterm{noetherian} if
there is no infinite sequence $u_1,u_2,\ldots \in A^*$ such that $u_i
\imreduces_{\drel{R}} u_{i+1}$ for all $i \in \nset$. The rewriting
system $(A,\drel{R})$ is \defterm{confluent} if, for any words $u,
u',u'' \in A^*$ with $u \reduces u'$ and $u
\reduces u''$, there exists a word $v \in A^*$ such that $u'
\reduces v$ and $u'' \reduces v$. A rewriting
system is complete if it is both confluent and Noetherian.

Let $(A,\drel{R})$ be a complete rewriting system. Then for any word $u
\in A^*$, there is a unique irreducible word $v \in A^*$ with $u
\reduces_{\drel{R}} v$ \cite[Theorem~1.1.12]{book_srs}. The irreducible
words are said to be in \defterm{normal form}. These irreducible words
form a cross-section of the monoid $\pres{A}{\drel{R}}$; thus this
monoid may be identified with the set of normal form words under the
operation of `concatenation plus reduction to normal form'.

\section{Sylvester monoids}

Let $A$ be the infinite ordered alphabet $\{1 < 2 < \ldots \}$. Let $\drel{R}$ be the (infinite) set of defining
relations
\[
\{(cavb,acvb) : a \leq b < c, v \in A^*\}.
\]
Then the \defterm{sylvester monoid}, denoted $S$, is presented by
$\pres{A}{\drel{R}}$~\cite[Definition~8]{hivert_algebra}. Note that
the presentation $\pres{A}{\drel{R}}$ is multihomogeneous.

A \defterm{(right strict) binary search tree} is a labelled rooted
binary tree where the label of each node is greater than or equal to
the label of every node in its left subtree, and strictly less than
every node in its right subtree; see the example in
\fullref{Figure}{fig:exbst}.

\begin{figure}[t]
\centering
\begin{tikzpicture}[every node/.style={circle,draw,inner sep=.7mm},level distance=10mm,level 1/.style={sibling distance=20mm},level 2/.style={sibling distance=10mm}]
  \node (root) {$4$}
    child { node (0) {$1$}
      child { node (00) {$1$} }
      child { node (01) {$3$}
        child { node (010) {$2$} }
        child { node (011) {$4$} } } }
    child { node (1) {$5$}
      child { node (10) {$5$} }
      child { node (11) {$6$} } };
\end{tikzpicture}
\caption{Example of a binary search tree $T$. The root has label $4$,
  so every label in the left subtree of the root is less than or equal
  to $4$ (and indeed the label $4$ does occur) and every label in the
  right subtree of the root is strictly greater than $4$. Notice that
  (for example) $T = \bst(265415314)$, and that $\lrp(T) =
  124315654$.}
\label{fig:exbst}
\end{figure}
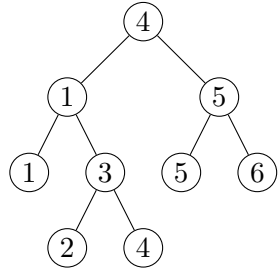

Given a binary search tree $\tree{T}$ and a symbol $a \in A$, one inserts $a$
into $\tree{T}$ as follows: if $\tree{T}$ is empty, create a node and label it
$a$. If $\tree{T}$ is non-empty, examine the label $x$ of the root node; if
$a \leq x$, recursively insert $a$ into the left subtree of the root
node; otherwise recursively insert $a$ into the right subtree of the
root node. Denote the resulting tree $a \cdot \tree{T}$. It is easy to see
that $a\cdot \tree{T}$ is also a binary search tree.

Given any word $w \in A^*$, define its corresponding binary search
tree $\bst(w)$ as follows: start with the empty tree and iteratively
insert the symbols in $w$ from right to left; again, see the example
in \fullref{Figure}{fig:exbst}.

The left-to-right postfix reading $\lrp(\tree{T})$ of a binary search
tree $\tree{T}$ is defined to be the word obtained as follows:
recursively perform the left-to-right postfix reading of the left
subtree of the root of $\tree{T}$, then recursively perform the
left-to-right postfix reading of the right subtree of the root of
$\tree{T}$, then output the label of the root of $\tree{T}$; again,
see the example in \fullref{Figure}{fig:exbst}. Note that
$\bst(\lrp(\tree{T})) = \tree{T}$ \cite[Proposition~15]{hivert_algebra}.

\begin{proposition}[{\cite[Theorem~10]{hivert_algebra}}]
\label{prop:bstcrossec}
Let $w,w' \in A^*$. Then $w =_{S} w'$ if and only if $\bst(w) =
\bst(w')$.
\end{proposition}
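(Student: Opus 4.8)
The plan is to prove the two implications of the biconditional separately.

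For the implication $w =_S w' \implies \bst(w) = \bst(w')$, I would first isolate the way tree insertion absorbs the defining relations, in the form of the following lemma: \emph{if a binary search tree $S$ contains a node labelled $b$ and $a \le b < c$, then inserting $a$ and then $c$ into $S$ produces the same tree as inserting $c$ and then $a$.} The proof is an induction on $|S|$ in which one follows the insertion paths of $a$ and of $c$ downward from the root of $S$ together. If at the current node the two paths diverge — one descending into the left child, the other into the right — then the subsequent insertions modify disjoint subtrees and the resulting tree does not depend on the order. If instead both descend into the same child, say the left, then $c \le r$ where $r$ is the label of the current node, hence $b < r$, hence (since $S$ is a binary search tree) the node labelled $b$ lies in that left subtree, so one may apply the inductive hypothesis to it; and when the current node is itself labelled $b$ the two paths necessarily diverge, since $a \le b < c$. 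Granting this lemma, observe that for any $x,y \in A^*$ and any defining relation $(cawb,acwb)$ (so $a \le b < c$, $w \in A^*$), computing $\bst$ from right to left shows that $\bst(x\,cawb\,y)$ and $\bst(x\,acwb\,y)$ are both obtained from $\bst(y)$ by first inserting the letters of $wb$ from right to left — producing a tree $S$ that contains a node labelled $b$ — then inserting $a$ and $c$ into $S$ in the two possible orders, and finally inserting the letters of $x$. The lemma gives equality of the two trees. Since $=_S$ is the congruence generated by $\drel{R}$, the binary search tree of a word is unchanged by every elementary application of a defining relation inside any context, and is therefore constant on each $=_S$-class.

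For the converse, I would show that $w =_S \lrp(\bst(w))$ for every $w \in A^*$; this suffices, because $\bst(w) = \bst(w')$ then forces $w =_S \lrp(\bst(w)) = \lrp(\bst(w')) =_S w'$. Write $w = w_1 z$ with $z$ the final letter. The key observation is that $z$ can play the role of the distinguished letter $b$ in the defining relations: the relation $cavz =_S acvz$ holds exactly when $a \le z < c$, so any occurrence within $w_1$ of a letter $c > z$ immediately followed by a letter $a \le z$ may be transposed without disturbing the trailing $z$. Iterating such transpositions stably sorts $w_1$ according to the predicate ``greater than $z$?'', so $w_1 z =_S (w_1|_{\le z})(w_1|_{> z})\,z$, where $w_1|_{\le z}$ and $w_1|_{> z}$ denote the subwords of $w_1$ consisting of the letters $\le z$ and the letters $> z$ respectively. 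On the other hand, since $z$ is the first letter inserted when building $\bst(w_1 z)$, this tree has root $z$, left subtree $\bst(w_1|_{\le z})$ and right subtree $\bst(w_1|_{> z})$, whence $\lrp(\bst(w_1 z)) = \lrp(\bst(w_1|_{\le z}))\,\lrp(\bst(w_1|_{> z}))\,z$. Applying the induction hypothesis (on word length) to the strictly shorter words $w_1|_{\le z}$ and $w_1|_{> z}$, and using that $=_S$ is a congruence, then gives $w_1 z =_S \lrp(\bst(w_1 z))$, completing the induction.

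I expect the main obstacle to be the insertion lemma in the first part: the argument works only because one carries along the bookkeeping that the node labelled $b$ always lies inside the subtree currently under examination, which is precisely what forces the two insertion paths to separate no later than at that node. Dropping this from the invariant makes the statement false, as two insertions need not commute in an arbitrary binary search tree. The sorting step in the converse also requires some care, but it becomes routine once one notices that the final letter of the word serves as the anchor letter ``$b$''.
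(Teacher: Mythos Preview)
The paper does not give its own proof of this proposition; it is quoted from \cite[Theorem~10]{hivert_algebra} and used as a black box. So there is nothing to compare against, but your argument is correct on its own terms.

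In the forward direction, the insertion lemma carries exactly the right invariant: the presence of a node labelled $b$ in the current subtree is what guarantees the insertion paths of $a$ and $c$ eventually diverge, and your case analysis is complete once one observes that $a \le b < c$ rules out the only remaining split (namely $a$ going right while $c$ goes left). In the converse, the stable sort of $w_1$ by the predicate ``${}> z$'' with the trailing $z$ playing the role of the anchor $b$ is the natural route to $w =_S \lrp(\bst(w))$, and the induction on length goes through because both filtered subwords are strictly shorter than $w$. One cosmetic point: you reuse the symbol $w$ for the middle segment of the defining relation $(cawb,acwb)$, which clashes with the $w,w'$ of the statement; the paper writes that segment as $v$.
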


\begin{lemma}
\label{lem:ocongimpliessamecontent}
If two elements of $S$ are $o$-conjugate, they have the same
content.
\end{lemma}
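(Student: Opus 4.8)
The plan is to exploit the fact, noted already in the excerpt, that the defining presentation $\pres{A}{\drel{R}}$ of the sylvester monoid $S$ is multihomogeneous. Because the two sides of every relation in $\drel{R}$ have the same content, applying a defining relation to a word does not change its content; hence if $w, w' \in A^*$ with $w =_S w'$, then $|w|_a = |w'|_a$ for every $a \in A$. This is the one point that needs to be observed, and it lets us speak unambiguously of the content of an element of $S$.

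Next I would record that content is additive with respect to the monoid multiplication: representing elements by words, $|xg|_a = |x|_a + |g|_a$ for all $a \in A$, since concatenation of words adds contents symbol by symbol. (If $g$ is the identity adjoined to form $S^1$ it may be taken to be the empty word, of content zero; in any case $S$ already possesses an identity, namely the class of the empty word.)

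Now suppose $x$ and $y$ are $o$-conjugate elements of $S$. Then by definition there exists $g \in S^1$ with $xg = gy$ in $S$ — this is only one half of the definition of $\sim_o$, and the half involving $h$ is not needed. Applying the content function to both sides of $xg = gy$ and using additivity gives $|x|_a + |g|_a = |g|_a + |y|_a$ for every $a \in A$; cancelling the natural number $|g|_a$ yields $|x|_a = |y|_a$ for all $a \in A$, so $x$ and $y$ have the same content.

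There is no real obstacle here: the only thing requiring care is that content is genuinely well-defined on $S$, which is immediate from multihomogeneity of $\pres{A}{\drel{R}}$. I should remark that the argument uses nothing special about $S$ beyond multihomogeneity, and in fact shows that in any multihomogeneous monoid, elements related by $\sim_\ell$ — and hence by $\sim_o$, since ${\sim_o} \subseteq {\sim_\ell}$ — necessarily have equal content.
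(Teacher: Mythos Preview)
Your proof is correct and follows essentially the same argument as the paper: use only the $xg = gy$ half of the $o$-conjugacy definition, apply multihomogeneity to get $|xg|_a = |gy|_a$, expand additively, and cancel $|g|_a$. Your additional remarks about well-definedness and the generalization to $\sim_\ell$ in arbitrary multihomogeneous monoids are sound but go slightly beyond what the paper records.
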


\begin{proof}
Let $x,y \in A^*$ be such that $x \sim_o y$ in $S$. Then there exists
$g \in A^*$ such that $xg =_{S} gy$. Let $a \in A$. Since the
presentation $\pres{A}{\drel{R}}$ is multihomogeneous, $|xg|_a =
|gy|_a$. Thus $|x|_a + |g|_a = |g|_a + |y|_a$, and so $|x|_a =
|y|_a$. Since $a \in A$ was arbitrary, $x$ and $y$ have the same
content.
\end{proof}

Call a binary search tree \defterm{left full} if all of its nodes have empty right subtrees. That is, a binary search
tree is left full if every node is a left child of its parent node. (See \fullref{Figure}{fig:exbstfullleft}.) An
element $x \in S$ is \defterm{left full} if $\bst(x)$ is left full.

If $\tree{T}$ is a left full binary search tree, then by the definition of the left-to-right postfix reading,
\[
\lrp(\tree{T}) = 1^{k_1}2^{k_2}\cdots m^{k_m}
\]
for some $m \in \nset \cup \{0\}$ and $k_i \in \nset \cup \{0\}$. It is thus clear that there is exactly one left full
element with each content.

\begin{lemma}
\label{lem:samecontentimpliespcong}
If two elements of $S$ have the same content, they are
$\sim_p^*$-conjugate.
\end{lemma}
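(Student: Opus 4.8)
The plan is to prove the apparently stronger statement that every element of $S$ is $\sim_p^*$-conjugate to the \emph{left full} element with the same content. Since $\sim_p^*$ is an equivalence relation and (as noted just above) there is exactly one left full element with each content, this suffices: two elements of equal content are then both $\sim_p^*$-conjugate to their common left full element, and hence to each other. At the level of words: for $x \in A^*$, let $\hat{x}$ denote the word obtained by listing the letters of $x$ in nondecreasing order; by the discussion preceding the lemma, $\hat{x}$ is exactly the normal form $\lrp(\tree{T})$ of the left full binary search tree $\tree{T}$ with the same content as $x$. So the task reduces to showing that $x \sim_p^* \hat{x}$ in $S$ for every word $x$.

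Two elementary operations on a word leave the represented element within the same $\sim_p^*$-class: (a)~passing to any other word representing the same element of $S$ --- in particular, applying a defining relation $cavb \imreduces acvb$ --- since $=_{S}$ is contained in $\sim_p$; and (b)~replacing a word $uv$ by $vu$, since $uv \sim_p vu$ by definition, and so in particular replacing a word by any one of its cyclic rotations. I would transform $x$ into $\hat{x}$ by finitely many such operations. Since $\hat{x}$ is reached from $x$ by a finite sequence of transpositions of adjacent descents, each strictly decreasing the number of inversions, it is enough to prove the following ``bubble-sort step'': for every word $p\,c\,a\,q$ with $a < c$, one has $p\,c\,a\,q \sim_p^* p\,a\,c\,q$.

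The mechanism is the defining relation itself. Suppose $p\,c\,a\,q$ contains, apart from the distinguished occurrence of $ca$, some letter $b$ with $a \leq b < c$. Using operation~(b), rotate the word cyclically so that this occurrence of $ca$ becomes a prefix; the rest of the word, which still contains such a letter $b$, then lies after this prefix, so the rotated word has the form $c\,a\,v\,b\,s$ with $a \leq b < c$, and the relation gives $c\,a\,v\,b\,s \imreduces a\,c\,v\,b\,s$. Rotating back via operation~(b) then gives $p\,a\,c\,q$, so that $p\,c\,a\,q \sim_p^* p\,a\,c\,q$. Thus the step succeeds whenever the half-open interval $[a,c)$ contains a letter of the word distinct from the descent's own $a$ --- for instance, whenever $a$ occurs more than once, or whenever some value strictly between $a$ and $c$ occurs.

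The main obstacle is the opposite case: no letter of the word lies in $[a,c)$ except the $a$ of the descent. (This is automatic once the word is in normal form: irreducibility forces every letter following a descent $ca$ to be $<a$ or $\geq c$.) Now no single relation transposes the pair, and since a cyclic rotation preserves the cyclic order of the letters, rotations alone are of no use. The way I would handle this is not to transpose \emph{that} descent: one shows that an unsorted word, possibly after a rotation, always presents \emph{some} descent $c\,a$ accompanied by a letter of $[a,c)$ elsewhere, so that the step above can be carried out there; the process still terminates since each successful step strictly lowers the inversion count. Establishing that such a favourable descent always exists --- equivalently, after unwinding the recursion, controlling how far a largest letter can be pushed past smaller ones using the relations --- requires a careful analysis of where the ``small'' ($<a$) and ``large'' ($\geq c$) letters sit around a stuck descent, and is the technical core of the proof.
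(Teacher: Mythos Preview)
Your reduction to showing $x \sim_p^* \hat{x}$ for the left-full representative $\hat{x}$ is exactly right, and so is the identification of the two basic moves (applying a defining relation and cyclically rotating). The gap is in the termination argument. Your key claim --- that an unsorted word, possibly after a cyclic rotation, always contains some descent $ca$ together with another occurrence of a letter in $[a,c)$ --- is false. Take $x = 213$. Its only descent is $21$, and the remaining letter $3$ does not lie in $[1,2)$. Rotating gives $132$ (sole descent $32$; remaining letter $1 \notin [2,3)$) and $321$ (descents $32$ and $21$; the respective remaining letters $1$ and $3$ again miss the intervals). So no rotation of $213$ has \emph{any} descent to which your mechanism applies, and your inversion-decreasing step cannot be performed even once. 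The last paragraph of your proposal asserts precisely the statement that this example refutes.

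Indeed the inversion count cannot serve as a potential function here: one correct route is
\[
213 \;\sim_p\; 132 \;=_S\; 312 \;\sim_p\; 123,
\]
and the middle step (using the defining relation with $c=3$, $a=1$, $v=\emptyword$, $b=2$, in the direction $acvb \to cavb$) strictly \emph{increases} the number of inversions. The paper therefore uses a different invariant: the \emph{left branch length} of $\bst(x)$, namely the number of nodes on the leftmost root-to-leaf path having empty right subtree. One $\sim_p$ move, sandwiched between several applications of the defining relations, is shown to strictly increase this quantity; since it is bounded above by $|x|$ and equals $|x|$ precisely when $x$ is left full, reverse induction on it finishes the proof. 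The choice of where to cut cyclically (at the minimal label $q$ in the right subtree of the lowest ``bad'' node on the leftmost path) is what replaces your search for a favourable descent, but it is guided by the tree structure rather than by the word.
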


\begin{proof}
Since $\sim_p^*$ is an equivalence relation, it will
suffice to prove that every element of $S$ is
$\sim_p^*$-related to the left full element with the same content.

\begin{figure}[t]
\centering
\begin{tikzpicture}[every node/.style={circle,draw,inner sep=.7mm},level distance=10mm,level 1/.style={sibling distance=20mm},level 2/.style={sibling distance=10mm}]
  \node (root) at (0,0) {$5$};
  \node (0) at (-.5,-.5) {$5$};
  \node (00)  at (-1,-1) {$3$};
  \node (000) at (-1.5,-1.5) {$2$};
  \node (0000) at (-2,-2) {$2$};
  \node (00000) at (-2.5,-2.5) {$1$};
  \draw (root)--(0);
  \draw (0)--(00);
  \draw (00)--(000);
  \draw (000)--(0000);
  \draw (0000)--(00000);
\end{tikzpicture}
\caption{Example of a left full binary search tree, in this case $\bst(122355)$.}
\label{fig:exbstfullleft}
\end{figure}
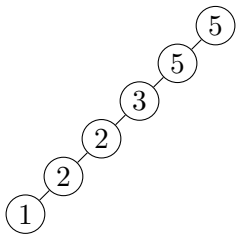

Consider a binary search tree $\tree{T}$. Starting from the root,
follow left child nodes until a node is found that has no left
child. Call this node $p_1$; this will be the minimal node of the
binary search tree. Let $p_2,\ldots,p_m$ be the successive ancestor
nodes of $p_1$ (that is, $p_{i+1}$ is the parent of $p_i$ for
$i=1,\ldots,m-1$, and $p_m$ is the root node). We define the
\defterm{left branch length} of $\tree{T}$ to be the maximal $k$ such
that none of $p_1,\ldots,p_k$ have right subtrees. Note that $p_1$ may
have a right subtree; in this case $\tree{T}$ has left branch length
$0$. Note further that $\tree{T}$ is left full if and only if its left
branch length is equal to the number of nodes in $\tree{T}$, or
equivalently equal to $|\lrp(\tree{T})|$. In general, the left branch
length of $\tree{T}$ is bounded above by $|\lrp(\tree{T})|$.

Let $h \in \nset$. We will prove the result for elements of $S$ of
length $h$ using reverse induction (from $h$ down to $0$) on the left
branch length of the corresponding binary search trees. Let $x \in S$
be such that $|x| = h$ and the left branch length of $\bst(x)$ is
$|x|$. Then, as noted above, $\bst(x)$ is a left full binary search
tree, and so $x$ is a left full element and there is nothing to
prove. This is the base case of the induction.

For the induction step, let $x \in S$ be such that $|x| = h$ and the
left branch length $k$ of $\bst(x)$ is strictly less than $|x|$, and
assume that every element of length $h$ whose binary search tree has
left branch length strictly greater than $k$ is $\sim_p^*$-related to
the left full element with the same content.

Let $\tree{T} = \bst(x)$, and let the $p_1,\ldots,p_k,\ldots,p_m$ be
as in the discussion above. Since $\tree{T}$ is not left full, $k$
(which is the left branch length of $\tree{T}$) is strictly less than
$h$, and $p_{k+1}$ has a right subtree, which we denote by
$\tree{T}_{k+1}$ for future reference. (See
\fullref{Figure}{fig:pickingy}.)  Note that $p_1 \leq p_2 \leq \ldots
\leq p_k$.

\begin{figure}[t]
\centering
\begin{tikzpicture}[x=11mm,y=11mm,every node/.style={circle,draw,inner sep=.3mm,minimum width=6mm},level distance=10mm,level 1/.style={sibling distance=20mm},level 2/.style={sibling distance=10mm}]
  \node (root) at (0,0) {};
  \node (0) at (-1,-1) {$p_{k+1}$};
  \node (00) at (-2,-2) {$p_2$};
  \node (000) at (-2.5,-2.5) {$p_1$};
  \node (01) at (0,-1.5) {};
  \node (010) at (-1,-2.5) {$q$};
  \node (0100) at (-2,-3.5) {$q$};
  \node (01000) at (-2.5,-4) {$q$};
  \node[forbidden sign] (01001) at (-1,-4.5) {$r$};
  \node[regular polygon,regular polygon sides=3] (0101) at (-.5,-3) {$w$};
  \draw[dashed] (root)--(0);
  \draw[dashed] (0)--(00);
  \draw (00)--(000);
  \draw (0)--(01);
  \draw[dashed] (01)--(010);
  \draw[dashed] (010)--(0100);
  \draw (010)--(0101);
  \draw (0100)--(01000);
  \draw[dashed] (0100)--(01001);
  \draw[dotted] (-3.5,-5)--(-3.5,-4.25)--(0,-0.75)--(0.5,-1.25)--(0.5,-5);
  \node[anchor=west,draw=none,rectangle] at (0.5,-1.25) {~subtree $\tree{T}_{k+1}$};
\end{tikzpicture}
\caption{The locations of $p_1,\ldots,p_{k+1}$, the nodes $q$, and the
  subtree with left-to-right postfix reading $u$. There is no vertex
  $r$ in the right subtree of any node $q$ below the uppermost.}
\label{fig:pickingy}
\end{figure}
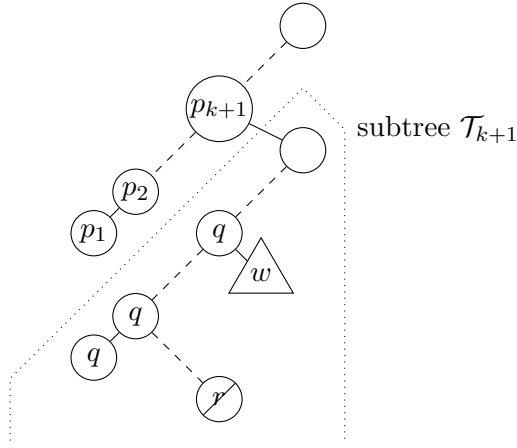

So $\lrp(\tree{T})$ is of the form $p_1p_2\cdots p_{k}up_{k+1}v$,
where $u$ is the left-to-right postfix reading of the non-empty right
subtree $\tree{T}_{k+1}$ of $p_{k+1}$, and $v$ is the left-to-right
postfix reading of the remainder of the tree above and to the right of
$p_{k+1}$ (and which will include $p_{k+2},\ldots,p_m$ in that order,
but not necessarily consecutively). Notice that if $m=k+1$, then
$p_{k+1}$ is the root note and $v = \emptyword$. On the other hand, if
$m>k+1$, then every symbol of the right subtree of $p_{k+1}$ is greater than $p_{k+1}$ and less than or equal to
$p_{k+2}$. Thus we deduce that $p_{k+1} < p_{k+2}$ and therefore
$p_{k+1}$ is less than every symbol in $v$. Therefore in both cases
$p_{k+1}$ is less than every symbol in $v$.

Starting from the root of $\tree{T}_{k+1}$, follow left child nodes until
a node is found that has no left child. Let $q$ be this node. Follow
the successive parent nodes until the uppermost node with value $q$ is
located, and let $w$ be the (possibly empty) left-to-right postfix
reading of the right subtree of this uppermost $q$. Suppose there are
$\ell$ such nodes $q$; note that $\ell \geq 1$. (See
\fullref{Figure}{fig:pickingy} again.)

Suppose for \textit{reductio ad absurdum} that one of the $\ell-1$
nodes $q$ below the uppermost has a right subtree. Let $r$ be some
symbol of this right subtree. Then $r > q$, since $r$ is in this right
subtree of this $q$, but $r \leq q$, since $r$ is in the left subtree
of the uppermost $q$; this is a contradiction. Hence only the
uppermost $q$ can have a non-empty right subtree.

Therefore $u = \lrp(\tree{T}_{k+1}) = q^{\ell - 1}wqu'$, where $u'$ is
the left-to-right postfix reading of the remainder of the
$\tree{T}_{k+1}$ above and to the right of the uppermost $q$.  Notice
that $p_{k+1} < q$, since $q$ is on the right subtree $\tree{T}_{k+1}$ of
$p_{k+1}$. Similarly, $q$ is less than every symbol of $w$. By the
choice of the uppermost node with value $q$ each symbol of $u'$ is
strictly greater than $q$.

As noted above, if $m = k+1$ then $v = \emptyword$ and so vacuously
$q$ is less than or equal to every symbol in $v$. If, on the other
hand, $m > k+1$, then $q$ is in the left subtree of $p_{k+2}$ and we
have $p_{k+1}<q\leq p_{k+2}$ and hence all the nodes above and to the
right of the node $p_{k+1}$ have value greater or equal than $q$: that
is, $q$ is less than or equal to every symbol in $v$.

Assume that $v$ contains $s$ instances of the symbol $q$. Let $\overline{v}$
denote the word obtained from $v$ by deleting the $s$ symbols $q$ (and
keeping the remaining symbols in the same order).

Thus we have
\begin{align*}
x ={}& p_1p_2\cdots p_{k} q^{\ell-1}wqu'p_{k+1}v \\
=_S{}& q^{\ell-1}wqu'p_1p_2\cdots p_{k} p_{k+1}v \\
&\qquad\text{[by multiple applications of $\drel{R}$ with $a$ being the $p_i$,} \\
&\qquad\quad\text{$b=p_{k+1}$, and $c$ being letters of $q^{\ell-1}wqu'$]} \displaybreak[0]\\
\sim_p{}& u'p_1p_2\cdots p_{k} p_{k+1}vq^{\ell-1}wq \displaybreak[0]\\
=_S{}& u'\overline{v}p_1p_2\cdots p_{k} p_{k+1}q^sq^{\ell-1}wq \\
&\qquad\text{[by multiple applications of $\drel{R}$ with $a$ being the $p_i$ or the $q$,}\\
&\qquad\quad\text{$b=q$, and $c$ being letters of $v$]} \displaybreak[0]\\
=_S{}& u'\overline{v}wp_1p_2\cdots p_{k} p_{k+1}q^sq^{\ell} \\
&\qquad\text{[by multiple applications of $\drel{R}$ with $a$ being the $p_i$ or the $q$,}\\
&\qquad\quad\text{$b=q$, and $c$ being letters of $w$]}
\end{align*}
[Note that we always have $a \leq b < c$ as required by the defintion
  of $\drel{R}$.]

When we apply the insertion algorithm to compute
$\bst(u'\overline{v}wp_1p_2\cdots p_{k} p_{k+1}q^sq^{\ell})$, the
$s+\ell$ symbols $q$ are inserted first, followed by the symbols
$p_{k+1},p_{k},\ldots,p_1$. So the root node is $q$, and, since $q >
p_{k+1} \geq p_{k} \geq \ldots \geq p_1 \geq p_1$, these symbols are
always inserted as left child nodes of at the leftmost node in the
tree.

\begin{figure}[t]
\centering
\begin{tikzpicture}[every node/.style={circle,draw,inner sep=.7mm},level distance=10mm,level 1/.style={sibling distance=20mm},level 2/.style={sibling distance=10mm}]
  \node (root) at (0,0) {$q$};
  \node (00)  at (-1,-1) {$q$};
  \node (000) at (-2,-2) {$p_{k+1}$};
  \node (00000) at (-3,-3) {$p_1$};
  \node[regular polygon,regular polygon sides=3] (01) at (1,-1) {$\phantom{w}$};
  \node[draw=none,anchor=west,rectangle,align=left] at (1.75,-1) {subtree containing symbols\\ from $u'\overline{v}w$};
  \draw[dashed] (root)--(00);
  \draw (00)--(000);
  \draw[dashed] (000)--(00000);
  \draw (root)--(01);
\end{tikzpicture}
\caption{Result of computing $\bst(u'\overline{v}wp_1p_2\cdots p_{k} p_{k+1}q^{\ell})$.}
\label{fig:computingbst}
\end{figure}

Since all of the symbols in $u'vw$ are strictly greater than $q$, they
are all inserted into the right subtree of the root node $q$. (See
\fullref{Figure}{fig:computingbst}.) So the left branch length of
$\bst(u'\overline{v}wp_1p_2\cdots p_{k} p_{k+1}q^sq^{\ell})$ is $(k+1)+(s+\ell-1)$,
which is greater than $k$ since $\ell\geq 1$. Hence, by the induction
hypothesis, $u'\overline{v}wp_1p_2\cdots p_{k} p_{k+1}q^sq^{\ell}$ is
$\sim_p^*$-related to the left full element with the same content as
$u'\overline{v}wp_1p_2\cdots p_{k} p_{k+1}q^sq^{\ell}$, which has the same content as
$x$. By transitivity, $x$ is $\sim_p^*$-related to the left full
element with the same content as $x$. Thus completes the induction
step and thus the proof.
\end{proof}

\begin{theorem}
\label{thm:sylvestermonoidocongchar}
In the sylvester monoid $\pres{A}{\drel{R}}$, two elements are
$o$-conjugate if and only if they have the same content (when viewed
as words). Moreover, $o$-conjugacy is decidable for the sylvester
monoid and ${\sim_o} = {\sim_p^*}$.
\end{theorem}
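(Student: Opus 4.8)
The plan is to assemble the theorem from the two preceding lemmas together with the standard chain of inclusions ${\sim_p} \subseteq {\sim_p^*} \subseteq {\sim_o} \subseteq {\sim_\ell}$ recalled in the introduction. First I would establish the combinatorial characterization as a two-way implication. If $x$ and $y$ are $o$-conjugate in $S$, then by \fullref{Lemma}{lem:ocongimpliessamecontent} they have the same content. Conversely, if $x$ and $y$ have the same content, then by \fullref{Lemma}{lem:samecontentimpliespcong} they are $\sim_p^*$-related, and hence $\sim_o$-related, since ${\sim_p^*} \subseteq {\sim_o}$. This closes the loop: for elements of $S$, the three conditions of being $\sim_o$-related, being $\sim_p^*$-related, and having the same content are mutually equivalent. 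In particular, combining ${\sim_p^*} \subseteq {\sim_o}$ with the inclusion ${\sim_o} \subseteq {\sim_p^*}$ just obtained (via ``same content''), we get ${\sim_o} = {\sim_p^*}$ on $S$.

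For decidability, I would observe that the content of an element of $S$ can be read directly off any representative word, and is effectively computable: given $x, y \in A^*$, only finitely many symbols of $A$ occur in the word $xy$, so one simply checks whether $|x|_a = |y|_a$ for each such $a$. By the characterization just established, this procedure decides $o$-conjugacy. (Note that one does not even need to invoke \fullref{Proposition}{prop:bstcrossec} or compute binary search trees here, since two words representing the same element of $S$ automatically have the same content by multihomogeneity.)

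There is essentially no obstacle remaining at this stage. The combinatorial heart of the argument is \fullref{Lemma}{lem:samecontentimpliespcong}, whose reverse induction on the left branch length of the binary search tree does all the real work; \fullref{Lemma}{lem:ocongimpliessamecontent} supplies the easy converse direction. The only point requiring a moment's care is to keep track of directions of implication — ensuring that the equivalence genuinely cycles through both lemmas and the inclusion ${\sim_p^*} \subseteq {\sim_o}$ used the right way round — so that the identity ${\sim_o} = {\sim_p^*}$ really does drop out rather than merely one inclusion.
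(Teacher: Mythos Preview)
Your proposal is correct and follows essentially the same approach as the paper: the paper's proof is a one-line appeal to \fullref{Lemmata}{lem:ocongimpliessamecontent} and~\ref{lem:samecontentimpliespcong} together with the inclusion ${\sim_p^*} \subseteq {\sim_o}$, and you have simply spelled out the cycle of implications and the decidability observation in slightly more detail.
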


\begin{proof}
This is immediate from \fullref{Lemmata}{lem:ocongimpliessamecontent}
and \ref{lem:samecontentimpliespcong} and the fact that ${\sim_p^*}
\subseteq {\sim_o}$.
\end{proof}

\section{Homogeneous monoids}

\subsection{\texorpdfstring{Decidability of $\sim_p^*$-conjugacy}{Decidability of \textasciitilde\textunderscore p*-conjugacy}}

It is easy to see that $\sim_p^*$-conjugacy is decidable for finitely generated homogeneous monoids, as follows: Let $M
= \pres{A}{\drel{R}}$ be a homogeneous monoid with $A$ finite and let $x,y \in A^*$. We can compute the set $P_x$ of all
words $\sim_p^*$-conjugate to $x$ by setting $Y_0 = \{x\}$ and then iteratively computing $Y_{k}$ to be the set of words
of the form $vu$ for some $uv \in W_t$ for some $t \in Y_{k-1}$. Since $W_t$ is effectively computable (as discussed in
\fullref{Subsection}{subsec:homogeneous}), computing each $Y_k$ is effective. Since every set in the sequence $Y_0
\subseteq Y_1 \subseteq \ldots$ is contained in the finite set $A^{|x|}$, we must have $Y_k = Y_{k+1} = \ldots = P_x$
for some $k$. Hence computing $P_x$ is effective.

In particular, this means that the $\sim_p^*$-conjugacy problem is
soluble for homogeneous monoids: to decide whether $x$ and $y$ are
$\sim_p^*$-conjugate, simply compute $P_x$ and check whether $y$ is
contained in $P_x$.

\subsection{\texorpdfstring{Undecidability of $o$-conjugacy}{Undecidability of o-conjugacy}}

We are now going to prove that $o$-conjugacy is undecidable for
homogeneous monoids. We will extend this to multihomogeneous monoids
in the next subsection. This section assumes familiarity with Turing
machines and undecidability; see \cite[Chs~7--8]{hopcroft_automata}
for background.

\begin{theorem}
\label{thm:oconjundechomogeneous}
There exists a finitely presented homogeneous monoid $M =
\pres{A}{\drel{R}}$ in which the $o$-conjugacy problem is
undecidable. That is, there is no algorithm that takes as input
two words over $A$ and decides whether they are $o$-conjugate.
\end{theorem}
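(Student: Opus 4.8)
The plan is to reduce an undecidable problem about Turing machines to the $o$-conjugacy problem in a suitable homogeneous monoid. The natural target is the halting problem (or, more precisely, an undecidable word-problem-flavoured question about configurations), following the strategy of Narendran \& Otto but arranging the presentation to be \emph{homogeneous} rather than merely complete. The key difficulty compared with the classical constructions is that homogeneity forbids length-changing rules, so one cannot naively simulate a Turing machine whose tape grows. The standard trick is to pad configurations: fix a word length and encode a halting computation of bounded tape usage, or, better, use a one-sided infinite tape together with an explicit endmarker and dummy ``blank'' symbols so that every transition preserves length. I would take a fixed universal (or otherwise fixed) Turing machine $\mathfrak{M}$ with undecidable halting problem on blank tape, and build $M$ so that a configuration word $w$ is $o$-conjugate to a designated ``halted'' word $h$ if and only if $\mathfrak{M}$ halts starting from the configuration encoded by $w$.

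First I would set up the alphabet $A$: tape symbols, state symbols, endmarkers $\$$ (or a pair of delimiters), and a special symbol $z$ acting as a sink/absorbing device — but one must be careful, since homogeneous monoids contain no zero, so I cannot literally use a zero to collapse $\sim_o$. Instead the idea is to make the ``halted'' computations reachable by \emph{reversible} length-preserving rewriting in a way that lets the conjugating elements $g,h$ in \eqref{eq:oconjdef} shuttle a configuration to normal form. Concretely, I would orient the Turing machine transitions as homogeneous rewriting rules $\ell \imreduces r$ with $|\ell| = |r|$ (padding with blanks on the right of the endmarker so the tape never ``runs out''), add their reverses if needed, and add rules that, once the halting state is reached, rewrite the entire configuration to a canonical word $c_n$ of the same length consisting of a fixed pattern. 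Then $x \sim_o c_n$ in $M$ will be forced to encode ``the computation from $x$ reaches the halting state'', because the only way to get the cyclic-conjugacy-plus-rewriting relations to connect $x$ with $c_n$ is via the simulation. I would then argue: (i) if $\mathfrak{M}$ halts on the input encoded by $x$, explicit choices of $g$ and $h$ witness $x \sim_o c_n$ (one conjugator runs the simulation forward, the other its reverse, both being plain words since the rewriting is homogeneous); (ii) conversely, using the content-preservation and length-preservation forced by homogeneity/multihomogeneity (as in \fullref{Lemma}{lem:ocongimpliessamecontent}), any witness of $x \sim_o c_n$ can be unwound to show the computation halts. Hence deciding $o$-conjugacy for $M$ would decide the halting problem, a contradiction.

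The main obstacle, and where the construction must be done carefully, is direction (ii): showing that $o$-conjugacy \emph{forces} a halting computation rather than allowing some spurious ``shortcut'' through the conjugating element $g$. The relation $xg =_M gy$ gives a lot of freedom, and one must design the rules so that the only equalities of the form $xg =_M gy$ (with $y$ the canonical word) arise from the simulation — typically by making the non-simulation rules ``inert'' on well-formed configuration words (e.g. by putting state/endmarker symbols so that no spurious overlaps occur), and by a careful normal-form analysis. One clean way to control this is to make the rewriting system confluent (so normal forms are unique) and exploit that homogeneous presentations are almost-confluent \cite[Proposition~3.2]{narendran_complexity}; then I can reason about normal forms of $xg$ and $gy$ directly. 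The remaining steps — verifying homogeneity of every rule, checking that the simulation is faithful, and assembling the reduction — are routine but must be stated precisely. Finally, once \fullref{Theorem}{thm:oconjundechomogeneous} is in hand, the passage to \fullref{Theorem}{thm:oconjundecmultihomogeneous} is exactly the embedding technique of \cite{cgm_homogeneous}, which converts a homogeneous presentation into a multihomogeneous one while preserving the relevant conjugacy-decidability behaviour.
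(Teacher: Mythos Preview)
Your high-level strategy is sound and matches the paper's: simulate a Turing machine with undecidable halting problem inside a homogeneous presentation, and reduce membership in $L(\tm{M})$ to an instance of $o$-conjugacy. You also correctly single out direction~(ii) --- that a witness of $o$-conjugacy forces a halting computation --- as the hard part, and the paper indeed spends most of its effort there, first proving that the rewriting system is complete and then carrying out a careful normal-form analysis of the conjugator.

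There is, however, a genuine gap in your plan at the point where you handle unbounded tape growth. Padding the configuration with blanks ``so the tape never runs out'' does not work: for a fixed input $w$ you cannot know in advance how much tape the computation will require, so you cannot choose a single padded configuration word $x$ whose $o$-conjugacy to the halted word decides halting on $w$. (If you pad by $n$, your homogeneous rules can only simulate computations using at most roughly $n$ tape cells, and that bounded problem is decidable.) The paper's key device is different and is what makes the construction go through: it introduces an extra ``fuel'' symbol $d$ that is \emph{not} part of the configuration but is supplied by the conjugator. Each computation step consumes one $d$ on the right of the configuration and emits zero, one, or two $d$'s on the left; thus the existential quantifier over $g$ in the definition of $\sim_o$ is precisely what permits arbitrarily long computations, via $g = d^\gamma$ with $\gamma$ as large as needed. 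A companion ``reset'' symbol $z$ supplies the other conjugator $h$. In particular, the two words being tested are short --- $h'q_0wh$ and $h'q_awh$ --- and all the unboundedness lives in the (unknown) conjugators, not in padding. Your proposal does not contain this mechanism, and without it the reduction you sketch does not produce an undecidable instance.

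A smaller point: your appeal to content-preservation (in the style of \fullref{Lemma}{lem:ocongimpliessamecontent}) for direction~(ii) will not help here, since the homogeneous presentation constructed is \emph{not} multihomogeneous --- state and tape symbols genuinely change under the rules. The paper instead uses completeness of the system and analyses which rules can fire on $h'q_0wh\,u$ for an irreducible conjugator $u$, tracking the $d$'s and $z$'s explicitly.
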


Let us outline the general strategy of the proof before beginning. The aim is to take a Turing machine with undecidable
halting problem, and build a finite complete rewriting system that simulates its computation. The rewriting system will
present a homogeneous monoid in such a way that the Turing machine halts on a given input $w$ if and only if the element
corresponding to the initial configuration of the machine with input $w$ is $o$-conjugate to the element corresponding
to a given halting configuration. However, the definition of the rewriting system is complicated by our need to build a
\emph{homogeneous} presentation.

\begin{proof}
Let $\tm{M} = (\Sigma,Q,\delta,q_0,q_a)$ be a deterministic Turing
machine with undecidable halting problem, where $\Sigma$ is the tape
alphabet (which contains a blank symbol $b$), $Q$ is the state set,
$\delta$ is the transition (partial) function, $q_0$ is the initial
state, and $q_a$ is the unique halting state.

For our purposes, a \defterm{configuration} of the Turing machine
$\tm{M}$ is a word $uqv$ with $u,v \in (\Sigma \cup \{b\})^*$ and $q
\in Q$. The symbol $q$ records the current state of the machine, the
word $uv$ records the contents of all cells of the tape that were
either initially non-blank or which have been visited by the
read/write head of the Turing machine, and the read/write head is
currently pointing at the cell corresponding to the leftmost symbol of
$v$. (Thus if the leftmost cell visited by read/write head is now
blank, then $u$ will begin with $b$. This is non-standard: normally, a
Turing machine configuration records only the smallest part of the
tape that includes all non-blank cells and the current position of the
read/write head. Thus one normally assumes that $u$ begins with a
symbol from $\Sigma$ and $v$ ends with a symbol from $\Sigma$.)

We will make some assumptions about $\tm{M}$; it is easy to see that
any deterministic Turing machine can be modified to give an equivalent
one satisfying these assumptions:
\begin{itemize}
\item $\tm{M}$ either changes the symbol on the tape where the
  read/write head is positioned, or moves the head left or
  right. (That is, it does not change the symbol \emph{and} move the
  head.) Thus we view the transition function as being a map from $Q
  \times \Sigma$ to $Q \times (\Sigma \cup \{L,R\})$.
\item $\tm{M}$ has no defined computation after entering the halting
  state $q_a$. That is, $\delta$ is undefined on $\{q_a\} \times
  \Sigma$.
\item $\tm{M}$ stores its initial tape contents in some part of the
  tape and, immediately before entering the halting state $q_a$,
  erases all symbols from the tape except this original content and
  moves to the left of the non-blank part of the tape. So $w \in
  L(\tm{M})$ if and only if $q_0w \tmcomp b^\alpha q_awb^\beta$ for
  some $\alpha,\beta$.
\end{itemize}

Let $\Sigma'$ be a set in bijection with $\Sigma$ under the map $x
\mapsto x'$. Let $A = \Sigma \cup \Sigma' \cup Q \cup \{h,h',d,z\}$; we are
going to define a rewriting system $(A,\drel{R})$. Let $\drel{R}$
consist of the following rewriting rules, for all $q_i \in Q$,
$s_i,s_j \in \Sigma$ and $x,y \in \Sigma \cup \{b\}$:

\smallskip
Group I (for $q_i,q_j \in Q$, $s_\ell \in \Sigma$ and $x,y \in \Sigma \cup \{b\}$):
\begin{align}
q_ixd &\to dq_js_\ell &\text{where }&(q_i,x)\delta = (q_j,s_\ell), \label{eq:1a}\\
q_ixdh &\to d^2q_jh &\text{where }&(q_i,x)\delta = (q_j,b), \label{eq:1b}\\
q_ixdy &\to dq_jb y &\text{where }&(q_i,x)\delta = (q_j,b), \label{eq:1c}\displaybreak[0]\\[1mm]
q_ihd &\to q_js_\ell h &\text{where }&(q_i,b)\delta = (q_j,s_\ell), \label{eq:2a}\\
q_ihd &\to dq_jh &\text{where }&(q_i,b)\delta = (q_j,b), \label{eq:2b}\displaybreak[0]\\[1mm]
q_ixd &\to dx'q_j &\text{where }&(q_i,x)\delta = (q_j,R), \label{eq:3}\displaybreak[0]\\
q_ihd &\to b'q_jh &\text{where }&(q_i,b)\delta = (q_j,R), \label{eq:4}\displaybreak[0]\\
y'q_ixd &\to dq_jyx &\text{where }&(q_i,x)\delta = (q_j,L), \label{eq:5}\displaybreak[0]\\[1mm]
s_\ell'q_ihd &\to dq_js_\ell h &\text{where }&(q_i,b)\delta = (q_j,L), \label{eq:6a}\\
b'q_ihd &\to d^2q_jh &\text{where }&(q_i,b)\delta = (q_j,L), \label{eq:6b}\displaybreak[0]\\[1mm]
h'q_ixd &\to h'q_jbx &\text{where }&(q_i,x)\delta = (q_j,L), \label{eq:7}\displaybreak[0]\\
h'q_ihd &\to dh'q_jh &\text{where }&(q_i,b)\delta = (q_j,L). \label{eq:8}
\end{align}

Group II (for $s_i \in \Sigma$):
\begin{align}
b'q_a &\to dq_a, \label{eq:9}\displaybreak[0]\\
q_as_id &\to dq_as_i. \label{eq:10}
\end{align}

Group III (for $s_i \in \Sigma$ and $x,y \in \Sigma \cup \{b\}$):
\begin{align}
xyd &\to xdy, \label{eq:11}\displaybreak[0]\\
s_ihd &\to s_idh, \label{eq:12}\displaybreak[0]\\
x'd &\to dx', \label{eq:13}\displaybreak[0]\\
h'd &\to dh'. \label{eq:14}
\end{align}

Group IV (for $s_i,s_j \in \Sigma$):
\begin{align}
s_ihz &\to s_izh, \label{eq:15}\displaybreak[0]\\
s_is_jz &\to s_izs_j, \label{eq:16}\displaybreak[0]\\
h'q_as_iz &\to zh'q_0s_i. \label{eq:17}
\end{align}
Let $M = \pres{A}{\drel{R}}$. Notice that $\pres{A}{\drel{R}}$ is a
homogeneous presentation, and so $M$ is homogeneous. We will now
proceed to show that the rewriting system $(A,\drel{R})$ is complete
(\fullref{Lemmata}{lem:noetherian} and \ref{lem:locallyconfluent}). We
will then show how the halting problem for $\tm{M}$ reduces to the
$o$-conjugacy problem for $M$.

\begin{lemma}
\label{lem:noetherian}
The rewriting system $(A,\drel{R})$ is noetherian.
\end{lemma}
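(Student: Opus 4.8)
The plan is to exhibit a weight function on words that strictly decreases under every rewriting rule, thereby ruling out infinite reduction sequences. I would assign to each letter of $A$ a positive integer weight and extend it additively to words; for this to work, every rule must be weight-nonincreasing, and the rules that preserve total weight must be controlled by a secondary parameter.

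First I would observe that the rules split naturally into two kinds. The rules in Groups I and II (except possibly \eqref{eq:10}) consume the letter $d$ — each such rule has one or two $d$'s on the left but the resulting $d$'s come from $d \mapsto d$ or $d \mapsto d^2$ while a state letter $q_i$ "moves"; the key point is that these rules make genuine progress in simulating $\tm{M}$. The rules in Group III and \eqref{eq:10} merely \emph{transport} a $d$ leftwards past other letters without changing anything else (e.g.\ $xyd \to xdy$, $x'd \to dx'$, $h'd \to dh'$, $s_ihd \to s_idh$, $q_as_id \to dq_as_i$), and the Group IV rules \eqref{eq:15}, \eqref{eq:16} similarly transport a $z$ leftwards. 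So the natural approach is a lexicographic combination: a primary measure that strictly drops on the "simulation" rules and is non-increasing on the "transport" rules, together with a secondary measure (the total leftward displacement of $d$'s and $z$'s, i.e.\ $\sum$ over occurrences of $d$ of the number of letters to its left, plus the analogous sum for $z$) that strictly drops on each transport rule.

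Concretely I would try the following. For the secondary measure, note that each transport rule of the form "$\alpha\beta \to \alpha'\beta'$ with a $d$ (or $z$) hopping one position left past a non-$d$, non-$z$ letter" strictly decreases $\Phi(w) := \sum_{\text{occurrences of }d}(\text{number of letters of }w\text{ strictly to its left}) + \sum_{\text{occurrences of }z}(\dots)$, provided none of the transport rules ever \emph{creates} a new $d$ or $z$ or moves one rightwards — which is visibly the case for \eqref{eq:10}--\eqref{eq:16}. For the primary measure I would look for weights making \eqref{eq:1a}--\eqref{eq:9} and \eqref{eq:17} strictly decreasing while \eqref{eq:10}--\eqref{eq:16} are weight-preserving; since the transport rules literally permute the multiset of letters, any weight function preserves them automatically, so the only real constraints are on Groups I, II\,(rule~\eqref{eq:9}), and IV\,(rule~\eqref{eq:17}). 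Examining those: \eqref{eq:1b} is $q_ixdh \to d^2q_jh$, trading $\{q_i,x\}$ for $\{d,q_j\}$; \eqref{eq:6b} trades $\{b',q_i\}$ for $\{d,q_j\}$; \eqref{eq:3} trades $\{q_i,d\}$ for $\{d,x',q_j\}$ — wait, that one \emph{gains} a letter, so it cannot be handled by a letter-weight alone and must be forced to strictly decrease some component. This is where the argument gets delicate.

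I expect the main obstacle to be rule \eqref{eq:3} (and likewise \eqref{eq:7}, \eqref{eq:9}), where a primed letter $x'$ is created: these record head motion to the right, and since the head can sweep back and forth arbitrarily, one cannot bound the number of primed letters by a simple weight. The resolution I would pursue: assign $d$ a very large weight relative to all other letters, so that rules like \eqref{eq:1b}, \eqref{eq:6b}, \eqref{eq:2a} which \emph{destroy} a $d$-net are strongly decreasing, and engineer the initial configurations so that the total supply of $d$'s bounds how often head-motion rules can fire; more precisely, I would define the primary measure as (number of $d$'s)$\cdot C$ plus a bounded correction, choosing the constant $C$ large enough to dominate. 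Then one checks case by case that every rule either strictly decreases the number of $d$'s, or preserves it while strictly decreasing a bounded correction term, or preserves both while strictly decreasing $\Phi$. Since the verification is a finite case-check over roughly twenty rule schemes, the plan is to set up the right triple $(\#d\text{'s with multiplicity adjustments},\ \text{bounded correction},\ \Phi)$ ordered lexicographically and then simply tabulate its behaviour on each of \eqref{eq:1a}--\eqref{eq:17}; the hard part is guessing the correction term so that \eqref{eq:3}, \eqref{eq:4}, \eqref{eq:7}, \eqref{eq:13}, \eqref{eq:14} — which move primed letters and $h'$ around — come out right simultaneously with the $d$-consuming rules.
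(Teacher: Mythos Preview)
Your proposal has a concrete error and, more importantly, misses the natural argument entirely.

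First the error. You write that rules \eqref{eq:1b}, \eqref{eq:6b}, \eqref{eq:2a} ``destroy a $d$-net'', but in fact \eqref{eq:1b} ($q_ixdh\to d^2q_jh$) and \eqref{eq:6b} ($b'q_ihd\to d^2q_jh$) each \emph{create} an extra $d$, and \eqref{eq:9} ($b'q_a\to dq_a$) creates a $d$ out of a $b'$. Meanwhile \eqref{eq:4} ($q_ihd\to b'q_jh$) does the reverse, trading a $d$ for a $b'$. So no assignment of letter weights can make all rules strictly weight-decreasing: \eqref{eq:4} forces $\mathrm{wt}(d)>\mathrm{wt}(b')$ while \eqref{eq:9} forces $\mathrm{wt}(b')>\mathrm{wt}(d)$. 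Your primary measure ``number of $d$'s times a large constant'' therefore increases under \eqref{eq:1b}, \eqref{eq:6b}, \eqref{eq:9}, and the whole lexicographic triple collapses. (Your secondary positional measure $\Phi$ also increases when new $d$'s appear deep inside a long word, so it cannot rescue the situation.) Beyond this, the proposal remains a plan: the ``correction term'' is never specified and nothing is actually verified.

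The paper's argument is far simpler and exploits a feature you overlooked: the presentation is \emph{homogeneous}, so every rule preserves length. On words of a fixed length, the lexicographic order induced by any partial order on $A$ has no infinite descending chains. It therefore suffices to choose a partial order on letters so that, for each rule $\ell\to r$, the first position where $\ell$ and $r$ differ has a strictly smaller letter in $r$. Taking
\[
z < d < b < b' < s_i < h < q_k < s_j' < h'\qquad(s_i,s_j\in\Sigma,\;q_k\in Q),
\]
and declaring all state symbols mutually equal, one checks in a line or two per rule that every rule is strictly lex-decreasing (e.g.\ \eqref{eq:3} begins $q_i\mapsto d$; \eqref{eq:4} begins $q_i\mapsto b'$; \eqref{eq:7} has first difference at position three or four, where $x\mapsto b$ or $d\mapsto b$; \eqref{eq:17} begins $h'\mapsto z$). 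No weights, no multi-component measures, no special analysis of head motion is needed.
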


\begin{proof}
Let $\leq$ be any partial order on the alphabet $A$ satisfying
\begin{align*}
&z < d < b < b' < s_i < h < q_k < s_j' < h' && \text{for $s_i,s_j \in \Sigma$ and $q_k \in Q$,} \\
&q_i = q_j &&\text{for $q_i,q_j \in Q$.}
\end{align*}
Let $\leq_\lenlex$ be the partial length-plus-lexicographic order induced by
$\leq$ on $A^*$.

By inspection, applying any rewriting rule in $\drel{R}$ results in a
strict reduction with respect to $\leq_\lenlex$. Since there are no
infinite descending chains in the partial length-plus-lexicographic
order, any process of rewriting using $\drel{R}$ must terminate. Hence
$(A,\drel{R})$ is noetherian.
\end{proof}

\begin{lemma}
\label{lem:locallyconfluent}
The rewriting system $(A,\drel{R})$ is locally confluent.
\end{lemma}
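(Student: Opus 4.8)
The plan is to verify local confluence by the standard critical-pair analysis: by Newman's lemma it suffices to check that every critical pair (arising from an overlap of two left-hand sides of rules in $\drel{R}$) resolves, i.e.\ the two words obtained can be rewritten to a common word. So first I would enumerate all overlaps between left-hand sides. The left-hand sides are short words (length at most $4$), and most of them end in the letter $d$, $z$, or $h$, or begin with a primed letter, a state $q_i$, or a tape symbol; this structure should make the overlap search finite and organized. I would group the check by which "machinery" letter ($d$, $z$, $h$, or a primed letter) is shared between the two overlapping rules, and treat each group in turn.

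\medskip

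The key steps, in order: (1) List systematically, for each rule, the proper prefixes that are suffixes of some other rule's left-hand side, and vice versa, to get the complete set of critical pairs; here the fact that the Group III and Group IV rules ($xyd\to xdy$, $s_ihd\to s_idh$, $x'd\to dx'$, $h'd\to dh'$, $s_ihz\to s_izh$, $s_is_jz\to s_izs_j$) are the "transport" rules that shuffle a $d$ or $z$ leftwards means they overlap with almost everything ending in $d$ or $z$ — so most critical pairs involve one Group III/IV rule and one other rule. (2) For each such pair, I would compute the two one-step reducts and then chase both down to a common normal form, using the fact (from \fullref{Lemma}{lem:noetherian}) that the system is noetherian, so it is enough to reach \emph{any} common word, not necessarily the normal form. (3) Check the remaining overlaps among Group I/II rules themselves: because $\tm{M}$ is deterministic, for a fixed $(q_i,x)$ at most one of \eqref{eq:1a}--\eqref{eq:8} applies, so the only Group-I/II self-overlaps come from a left-hand side like $q_ixd$ being extendable to $q_ixdh$ or $q_ixdy$ — i.e.\ \eqref{eq:1a} versus \eqref{eq:1b} or \eqref{eq:1c} — and from the primed-letter prefixes in \eqref{eq:5}, \eqref{eq:7}, \eqref{eq:8} overlapping \eqref{eq:13}, \eqref{eq:14}; these need individual attention but the determinism assumption keeps their number small. (4) Finally handle the Group IV / Group II / Group I interactions around $q_a$: rules \eqref{eq:9}, \eqref{eq:10}, \eqref{eq:17} mention $q_a$ and overlap each other and the transport rules, and I would check these last.

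\medskip

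I expect the main obstacle to be sheer bookkeeping: the rule set is large and highly structured, and the danger is missing an overlap (e.g.\ an overlap of length $1$ or $2$ between a transport rule and one of the head-movement rules, where a $d$ produced on the right of one rule becomes the $d$ consumed by another). To control this I would organize the argument around the observation that $d$ and $z$ only ever move leftwards and are never created or destroyed except at the designated "ends" (the $h$/$h'$ markers and the $q_a$ rules), so most overlaps resolve essentially because "transporting $d$ past a block, then applying a head rule" and "applying the head rule, then transporting" commute. Once that principle is isolated, each individual critical pair is a short, routine computation; the real work is being exhaustive. A secondary subtlety is the overlap between \eqref{eq:1a} and \eqref{eq:1b}/\eqref{eq:1c}: these share the prefix $q_ixd$ but have \emph{different} right-hand sides depending on whether the written symbol is blank, so one must check that the case split $(q_i,x)\delta=(q_j,s_\ell)$ versus $(q_i,x)\delta=(q_j,b)$ makes these mutually exclusive — which it does, since $b\notin\Sigma$ — so in fact no genuine critical pair arises there.
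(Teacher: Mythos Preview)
Your plan is correct and essentially the same as the paper's: a systematic critical-pair check, organized by which groups of rules are involved. The paper's analysis in fact turns up far fewer genuine overlaps than you anticipate --- in particular, the primed-letter prefixes of \eqref{eq:5}, \eqref{eq:7}, \eqref{eq:8} do \emph{not} overlap with \eqref{eq:13} or \eqref{eq:14} (the symbol immediately preceding $d$ in every Group~I left-hand side is unprimed), and in total only five critical pairs need to be resolved explicitly (two Group~I--Group~III, two Group~I--Group~IV, and one Group~II--Group~II).
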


\begin{proof}
We will systematically examine possible overlaps of left-hand sides of
rules in $\drel{R}$ and show that they resolve.

\textit{Group~I--Group~I overlaps.} No non-trivial overlaps. This is
because symbols from $Q$ occur only once in these left-hand sides,
and exact overlaps of left-hand sides do not occur between rules of
different types because $\tm{M}$ is deterministic and so every
left-hand side determines a unique right-hand side.

\textit{Group~I--Group~II overlaps.} No non-trivial overlaps. This is
because symbols $q_a$ (the halting state of $\tm{M}$) do not occur on
the left-hand side of any Group~I rule, and symbols $b'$ and $d$ do
not occur at the start of left-hand sides of Group~I rules.

\textit{Group~I--Group~III overlaps.} Two possible overlaps, between
the left-hand sides of \eqref{eq:1c} and \eqref{eq:11}, and between
the left-hand sides of \eqref{eq:1c} and \eqref{eq:12}, both of which resolve:
\[
\begin{tikzpicture}[confluencediagram]
\node (start) at (0,0) {$q_ixdywd$};
\node (l1) at (-1,-1) {$dq_jbywd$};
\node (r1) at (1,-1) {$q_ixdydw$};
\node (finish) at (0,-2) {$dq_jbydw$};
\draw (start) edge[->] node[arrowlabel,left] {\eqref{eq:1c}} (l1);
\draw (start) edge[->] node[arrowlabel,right] {\eqref{eq:11}} (r1);
\draw (l1) edge[->] node[arrowlabel,left] {\eqref{eq:11}} (finish);
\draw (r1) edge[->] node[arrowlabel,right] {\eqref{eq:1c}} (finish);
\end{tikzpicture}
\qquad
\begin{tikzpicture}[confluencediagram]
\node (start) at (0,0) {$q_ixds_khd$};
\node (l1) at (-1,-1) {$dq_jbs_khd$};
\node (r1) at (1,-1) {$q_ixds_kdh$};
\node (finish) at (0,-2) {$dq_jbs_kdh$};
\draw (start) edge[->] node[arrowlabel,left] {\eqref{eq:1c}} (l1);
\draw (start) edge[->] node[arrowlabel,right] {\eqref{eq:12}} (r1);
\draw (l1) edge[->] node[arrowlabel,left] {\eqref{eq:12}} (finish);
\draw (r1) edge[->] node[arrowlabel,right] {\eqref{eq:1c}} (finish);
\end{tikzpicture}
\]
(Note that the symbols $x$ and $y$ in rules of type \eqref{eq:11} and
\eqref{eq:12} have been renamed here to $y$ and $w$, to avoid
conflicting with the $x$ in rules of type \eqref{eq:1c}. Similarly,
the subscript $i$ in rules of type \eqref{eq:12} has been renamed to
$k$ to avoid conflicting with the subscript $i$ in rules of type
\eqref{eq:1c}.) These are the only possible overlaps because the
symbols $d$ only appear once in each left-hand side, and in rules in
Group~III are preceded by two symbols in $\Sigma \cup \{b\}$, which
never happens in rules in Group~I. So overlaps cannot involve symbols
$d$. The only remaining possibilities are overlaps involving symbols
to the right of $d$ in Group~I rules, which happens precisely in the
cases we consider.

\textit{Group~I--Group~IV overlaps.} Two possible overlaps, between
the left-hand sides of \eqref{eq:1c} and \eqref{eq:15}, and between
the left-hand sides of \eqref{eq:1c} and \eqref{eq:16}, both of which resolve:
\[
\begin{tikzpicture}[confluencediagram]
\node (start) at (0,0) {$q_ixds_khz$};
\node (l1) at (-1,-1) {$dq_jbs_khz$};
\node (r1) at (1,-1) {$q_ixds_kzh$};
\node (finish) at (0,-2) {$dq_jbs_kzh$};
\draw (start) edge[->] node[arrowlabel,left] {\eqref{eq:1c}} (l1);
\draw (start) edge[->] node[arrowlabel,right] {\eqref{eq:15}} (r1);
\draw (l1) edge[->] node[arrowlabel,left] {\eqref{eq:15}} (finish);
\draw (r1) edge[->] node[arrowlabel,right] {\eqref{eq:1c}} (finish);
\end{tikzpicture}
\qquad
\begin{tikzpicture}[confluencediagram]
\node (start) at (0,0) {$q_ixds_ks_{k'}z$};
\node (l1) at (-1,-1) {$dq_jb s_ks_{k'}z$};
\node (r1) at (1,-1) {$q_izds_kzs_{k'}$};
\node (finish) at (0,-2) {$dq_jbs_kzs_{k'}$};
\draw (start) edge[->] node[arrowlabel,left] {\eqref{eq:1c}} (l1);
\draw (start) edge[->] node[arrowlabel,right] {\eqref{eq:16}} (r1);
\draw (l1) edge[->] node[arrowlabel,left] {\eqref{eq:16}} (finish);
\draw (r1) edge[->] node[arrowlabel,right] {\eqref{eq:1c}} (finish);
\end{tikzpicture}
\]
(Note that the subscript $j$ in rules of type \eqref{eq:16} has been
renamed here to $k'$, to avoid conflicting with the subscript $j$ in
rules of type \eqref{eq:1c}.)  These are the only possible overlaps
since symbols $z$ do not appear in rules in Group~I and symbols $d$ do
not occur in rules in Group~III. So overlaps cannot involve symbols
$z$. The only remaining possibilities are overlaps involving symbols
to the right of $d$ in Group~I rules, which happens precisely in the
cases we consider.

\textit{Group~II--Group~II overlaps.} By inspection, the only possible
overlaps are between the left-hand sides of rules of type \eqref{eq:9}
and type \eqref{eq:10}, and these overlaps resolve:
\[
\begin{tikzpicture}[confluencediagram]
\node (start) at (0,0) {$b'q_as_id$};
\node (l1) at (-1,-1.5) {$dq_as_id$};
\node (r1) at (1,-1) {$b'dq_as_i$};
\node (r2) at (1,-2) {$db'q_as_i$};
\node (finish) at (0,-3) {$d^2q_as_i$};
\draw (start) edge[->] node[arrowlabel,left] {\eqref{eq:9}} (l1);
\draw (start) edge[->] node[arrowlabel,right] {\eqref{eq:10}} (r1);
\draw (r1) edge[->] node[arrowlabel,right] {\eqref{eq:13}} (r2);
\draw (l1) edge[->] node[arrowlabel,left] {\eqref{eq:10}} (finish);
\draw (r2) edge[->] node[arrowlabel,right] {\eqref{eq:9}} (finish);
\end{tikzpicture}
\]

\textit{Group~II--Group~III overlaps.} There are no overlaps between
the left-hand side of a rule of type \eqref{eq:9} and a Group~III
rule, because $q_a$ does not appear in left-hand sides of Group~III
rules, and no left-hand side of a Group~III rule ends in $b'$. There
are no overlaps between left-hand side of a rule of type \eqref{eq:10}
and a Group~III rule since if $d$ is involved in the overlap, the
preceding letter of the Group~III rule must also be involved, which
limits us to rules of type \eqref{eq:11}; this is a contradiction
since in rules of type \eqref{eq:11} the symbol $d$ is preceded by two
symbols from $\Sigma \cup \{b\}$, not the symbols $q_a$ from the rule
of type \eqref{eq:9}.

\textit{Group~II--Group~IV overlaps.} There are no overlaps between
the left-hand sides of a Group~II rule and a Group~IV rule since the
left hand sides of Group~II rules start and end with symbols $b'$,
$q_a$, and $d$, and only $q_a$ appears in the left-hand side of a
Group~IV rule (in type \eqref{eq:17} rules), but between symbols $h'$
and $s_iz$, and $h'$ and $z$ do not appear on left-hand sides of
Group~II rules.

\textit{Group~III--Group~III overlaps.} There are no overlaps between
left-hand sides of Group~III rules, since they all end in symbols $d$
that appear nowhere else in the left-hand sides.

\textit{Group~III--Group~IV overlaps.} There are no overlaps between
left-hand sides of Group~III rules and Group~IV rules, since all
left-hand sides of Group~III end in symbols $d$, which do not appear
on left-hand sides of Group~IV rules, and left-hand sides of Group~IV
rules end in symbols $z$, which do not appear on left-hand sides of
Group~III rules.

\textit{Group~IV--Group~IV overlaps.} There are no overlaps between
left-hand sides of Group~IV rules, since they all end in symbols $z$
that appear nowhere else in the left-hand sides.

Since all overlaps resolve, the rewriting system $(A,\drel{R})$ is
locally confluent.
\end{proof}

By \fullref{Lemmata}{lem:noetherian} and \ref{lem:locallyconfluent}, the
rewriting system $(A,\drel{R})$ is complete.

\begin{lemma}
\label{lem:onestepcomputationandrewriting}
Let $u_1q_1v_1$ and $u_2q_2v_2$ be configurations of $\tm{M}$. Let
$\hat{v}_1$ and $\hat{v}_2$ be such that $v_1 = \hat{v}_1b^{\alpha_1}$
and $v_2 = \hat{v}_2b^{\alpha_2}$, where $\alpha_1$ and $\alpha_2$ are
maximal (possibly $0$). Then $u_1q_1v_1 \tmcomponestep u_2q_2v_2$ if
and only if
\[
h'u_1'q_1\hat{v}_1hd \reduces
d^{1+|u_1|-|u_2|+|\hat{v}_1|-|\hat{v}_2|}h'u_2'q_2\hat{v}_2h.
\]
Furthermore, in this case, $1+|u_1|-|u_2|+|\hat{v}_1|-|\hat{v}_2|$ is
either $0$, $1$, or $2$.
\end{lemma}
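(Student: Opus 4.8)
\emph{Forward implication.} Write $x$ for the letter scanned by $\tm{M}$ in the configuration $u_1q_1v_1$ --- the first letter of $\hat{v}_1$ if $\hat{v}_1\ne\emptyword$, and the blank $b$ otherwise --- so that the whole argument is a case analysis on the move prescribed by $(q_1,x)\delta$. Suppose $u_1q_1v_1\tmcomponestep u_2q_2v_2$; the required reduction is assembled in three phases. First, from $h'u_1'q_1\hat{v}_1hd$ the rules of Group~III transport the unique letter $d$ leftwards: \eqref{eq:12} carries it past the trailing $h$ and \eqref{eq:11} carries it through $\hat{v}_1$, reaching $h'u_1'q_1xd\hat{w}h$ when $\hat{v}_1=x\hat{w}$, or $h'u_1'q_1hd$ when $\hat{v}_1=\emptyword$. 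Second, exactly one Group~I rule applies, selected by the type of move and by a few local features: whether $x$ is a recorded letter or the right-hand-edge blank; for a blank-write, whether $x$ is the last recorded letter (so the recorded suffix shrinks) or not; and, for an $L$-move, whether $u_1$ is empty and otherwise what its last letter is. Thus a non-blank write uses \eqref{eq:1a} or \eqref{eq:2a}; a blank write uses \eqref{eq:1b}, \eqref{eq:1c} or \eqref{eq:2b}; an $R$-move uses \eqref{eq:3} or \eqref{eq:4}; an $L$-move uses \eqref{eq:5}, \eqref{eq:6a}, \eqref{eq:6b}, \eqref{eq:7} or \eqref{eq:8}. Applying the rule rewrites the window around $q_1$ to the corresponding window around $q_2$, leaving $0$, $1$, or $2$ copies of $d$ just left of the primed block preceding $q_2$. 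Third, \eqref{eq:13} and \eqref{eq:14} transport those $d$'s leftwards through $u_2'$ and past the leading $h'$, yielding $d^{k}h'u_2'q_2\hat{v}_2h$. In each of the twelve cases one checks by inspection that the word produced is exactly this, with $\hat{v}_2$ the trailing-blank-trimmed scanned-onwards tape of the successor, and that $k\in\{0,1,2\}$; the identity $k=1+|u_1|-|u_2|+|\hat{v}_1|-|\hat{v}_2|$ is then automatic, as $\drel{R}$ is length-preserving and so the two words have equal length.

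\emph{Converse.} Here one shows the reduction from $h'u_1'q_1\hat{v}_1hd$ is essentially forced. The word has one letter $d$ (initially rightmost), one state letter, and no letter $z$; hence no Group~IV rule ever applies, and, since every Group~I and Group~II left-hand side needs a $d$ at a fixed position in the immediate neighbourhood of the state letter, the only applicable rules until $d$ reaches $q_1$ are the Group~III transport rules, which move $d$ strictly leftwards and touch nothing else. So the reduction must pass through $h'u_1'q_1xd\hat{w}h$ (or $h'u_1'q_1hd$), and from there the shape of $\drel{R}$ together with determinism of $\tm{M}$ leaves no choice, up to interchanging commuting transport steps: if $(q_1,x)\delta$ is defined the unique applicable Group~I rule fires and the $d$'s it produces can only march back to the front, reconstructing the forward reduction and hence the successor configuration; if $(q_1,x)\delta$ is undefined, no Group~I rule fires, $d$ stays stuck against $q_1$, and the resulting irreducible word is not of the form $d^{k}h'u_2'q_2\hat{v}_2h$, so no such reduction exists --- matching the absence of a successor of $u_1q_1v_1$. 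One must still dispose of the genuinely degenerate situations (empty $u_i$ or $\hat{v}_i$, and $q_1$ the halting state, where no Group~I rule is defined), but these are routine.

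The step I expect to be the main obstacle is the bookkeeping in the converse, together with the tracking of trailing blanks: the value of $k$ is governed entirely by how blanks migrate between the recorded suffix $\hat{v}_i$ and the implicit tail $b^{\alpha_i}$ (and, for $L$-moves into fresh tape, whether a blank joins $u_i$). Writing a blank at the edge, or stepping left off a non-blank into the edge region, shortens $\hat{v}$; stepping right, or writing over the edge blank, lengthens $u$; each such shift forces one of the extra copies of $d$. Verifying this across all twelve cases, and confirming in the converse that the migration of $d$ cannot be diverted either before or after the Group~I step, is where the care is needed; the rest is mechanical.
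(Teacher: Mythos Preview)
Your proposal is correct and follows essentially the same approach as the paper: a case-by-case verification in which Group~III rules transport the single $d$ leftwards to the state letter, a unique Group~I rule fires according to the transition type, and the resulting $d$'s are carried to the front by \eqref{eq:13} and \eqref{eq:14}. Your treatment is in fact more explicit than the paper's---you separate the converse as a forced-reduction argument and derive the exponent $k$ from length-preservation rather than tabulating it case by case---but the underlying decomposition and the enumeration of which Group~I rule handles which transition are the same.
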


\begin{proof}
This is essentially a case-by-case checking using Group~I and
Group~III rules in $\drel{R}$. Essentially, the symbol $d$ moves to
the left of the $h$ using a rule of type
\eqref{eq:12} (note that by definition $\hat{v}_1$ does not end in a
symbol $b$ and thus ends with a symbol $s_i$ if it is non-empty) and then using rules
\eqref{eq:11} until it is one symbol to the right of $q_1$. Then a
rewrite using a Group~I rule occurs. (If $\hat{v}_1$ is the empty
word, so that $q_i$ and $h$ are adjacent, the symbol $d$ does not move
left: rules \eqref{eq:2a}, \eqref{eq:2b}, \eqref{eq:4}, \eqref{eq:6a},
or \eqref{eq:6b} apply immediately.) This produces $0$, $1$, or $2$
symbols $d$ which then move leftwards using rules \eqref{eq:13} and
\eqref{eq:14}. Note that
\begin{itemize}
\item Rules \eqref{eq:1a}, \eqref{eq:1c}, \eqref{eq:2b}, and
  \eqref{eq:8} correspond to cases where $|u_1| = |u_2|$ and
  $|\hat{v}_1| = |\hat{v}_2|$. The first three rules correspond to
  cases where the read/write head does not move and a blank does not
  replace a non-blank symbol at the extreme right of the non-blank
  portion of the tape. The last rule is simply a special case to cover
  the case when the entire tape is blank and the read/write head is
  moved left, which is not covered by over rules.
\item The rule \eqref{eq:3} corresponds to cases where $|u_1|+1 =
  |u_2|$ and $|\hat{v}_1|-1 = |\hat{v}_2|$. This is the case where the
  read/write head moves to the right somewhere in the non-blank part
  of the tape.
\item Rules \eqref{eq:5} and \eqref{eq:6a} correspond to cases where
  $|u_1|-1 = |u_2|$ and $|\hat{v}_1|+1 = |\hat{v}_2|$. This is the
  case where the read/write head moves to the left somewhere in the
  non-blank part of the tape.
\item The rule \eqref{eq:4} corresponds to cases where $|u_1|+1 =
  |u_2|$ and $|\hat{v}_1| = |\hat{v}_2|$. This is the case where the
  read/write head is pointing to a blank symbol somewhere to the right
  of the non-blank part of the tape, and then moves further right.
\item The rule \eqref{eq:6b} corresponds to cases where $|u_1|-1 =
  |u_2|$ and $|\hat{v}_1| = |\hat{v}_2|$. This is the case where the
  read/write head is pointing to a blank symbol at least one symbol to
  the right of the non-blank part of the tape, and then moves left.
\item Rules \eqref{eq:2a} and \eqref{eq:7} corresponds to cases where
  $|u_1| = |u_2|$ and $|\hat{v}_1|+1 = |\hat{v}_2|$. The first rule is
  where the read/write head is pointing either to the first blank
  symbol to the right of the non-blank part of the tape, or to some
  blank symbol further right, and replaces it with a non-blank
  symbol. The second rule is where the read/write head is pointing
  either to the leftmost non-blank symbol or some blank symbol,
  then moves further left.
\item The rule \eqref{eq:1b} corresponds to cases where $|u_1| =
  |u_2|$ and $|\hat{v}_1|-1 = |\hat{v}_2|$. This rule is where the
  read/write head is pointing to the rightmost non-blank symbol on the
  tape, and replaces it with a blank.
\end{itemize}
\end{proof}

\begin{lemma}
\label{lem:computationandrewriting}
Using notation from
\fullref{Lemma}{lem:onestepcomputationandrewriting}, $u_1q_1v_1
\tmcomp u_2q_2v_2$ if and only if there is some natural $\gamma$ such that
\[
h'u_1'q_1\hat{v}_1hd^\gamma \reduces
d^{\gamma+|u_1|-|u_2|+|\hat{v}_1|-|\hat{v}_2|}h'u_2'q_2\hat{v}_2h.
\]
In particular, $q_0w \tmcomp (b')^\alpha q_a wb^\beta$ if and only if
there is some natural $\gamma$ such that
\begin{equation}
\label{eq:computationandrewriting}
h'q_0whd^\gamma \reduces
d^{\gamma-\alpha}h'(b')^\alpha q_awh \reduces d^\gamma h'q_awh.
\end{equation}
\end{lemma}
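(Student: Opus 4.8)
The plan is to prove the two implications of the first sentence separately and then deduce the displayed equivalence by specialization.

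\emph{Forward implication.} I would argue by induction on the number $n$ of steps in $u_1q_1v_1 \tmcomp u_2q_2v_2$. If $n = 0$ then $u_1q_1v_1 = u_2q_2v_2$, and taking $\gamma = 0$ the asserted reduction is just reflexivity of $\reduces$ while the exponent equals $0$. For the inductive step, write the computation as $u_1q_1v_1 \tmcomp u'q'v' \tmcomponestep u_2q_2v_2$, where the first part has $n$ steps and $v' = \hat v'b^{\alpha'}$ with $\alpha'$ maximal. By induction there is $\gamma' \in \nset$ with $h'u_1'q_1\hat v_1 h d^{\gamma'} \reduces d^{\gamma' + |u_1| - |u'| + |\hat v_1| - |\hat v'|}h'(u')'q'\hat v' h$; append a single symbol $d$ (using that $\reduces$ is preserved under appending and prepending letters), take $\gamma = \gamma' + 1$, and then apply \fullref{Lemma}{lem:onestepcomputationandrewriting} to the last step $u'q'v' \tmcomponestep u_2q_2v_2$. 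Concatenating the two reductions and collapsing the telescoping exponent gives exactly $h'u_1'q_1\hat v_1 h d^{\gamma} \reduces d^{\gamma + |u_1| - |u_2| + |\hat v_1| - |\hat v_2|}h'u_2'q_2\hat v_2 h$. (In fact $\gamma = n$ works.)

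\emph{Backward implication.} Here I would use that $(A,\drel R)$ is complete (by \fullref{Lemmata}{lem:noetherian} and \ref{lem:locallyconfluent}), so that every word has a unique normal form. From $h'u_1'q_1\hat v_1 h d^\gamma$ the only reductions initially available move symbols $d$ leftwards with Group~III rules, after which a Group~I rule fires exactly according to the transition function of the deterministic machine $\tm M$, and once the halting state $q_a$ has been produced the Group~II rules take over; hence every reduction out of $h'u_1'q_1\hat v_1 h d^\gamma$ simulates the run of $\tm M$ from $u_1q_1v_1$, and by confluence they all reach the same normal form. Combining this with the forward implication and with the assumption that $\tm M$ erases its tape and moves to the left of the non-blank portion just before halting, one checks that this normal form is $d^{\gamma + |u_1| - |u'| + |\hat v_1| - |\hat v'|}h'(u')'q'\hat v' h$ if $\tm M$ runs for at least $\gamma$ steps and is at $u'q'v'$ after $\gamma$ of them, and is $d^{k}h'q_a\tilde w h$ (for a suitable $k$ and the stored content $\tilde w$) once $\tm M$ halts within $\gamma$ steps; in either case this word is irreducible. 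Now suppose $h'u_1'q_1\hat v_1 h d^\gamma \reduces d^{\gamma+\Delta}h'u_2'q_2\hat v_2 h$ with $\Delta = |u_1| - |u_2| + |\hat v_1| - |\hat v_2|$; reducing the right-hand side to normal form — via \eqref{eq:9}, \eqref{eq:13}, \eqref{eq:14} when $q_2 = q_a$ — and comparing with the description just given forces $u_2q_2v_2$ to coincide, up to trailing blanks, with a configuration occurring on the run of $\tm M$ from $u_1q_1v_1$. Thus $u_1q_1v_1 \tmcomp u_2q_2v_2$.

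\emph{The displayed special case} follows by taking $u_1q_1v_1 = q_0w$ (so $u_1 = \emptyword$ and $\hat v_1 = w$) and $u_2q_2v_2$ equal to a halting configuration $b^\alpha q_a wb^\beta$ (so $u_2' = (b')^\alpha$, $\hat v_2 = w$, and $\Delta = -\alpha$): the general statement gives $h'q_0whd^\gamma \reduces d^{\gamma - \alpha}h'(b')^\alpha q_awh$, after which $\alpha$ applications of \eqref{eq:9}, interleaved with \eqref{eq:13} and finished with \eqref{eq:14}, turn the prefix $(b')^\alpha q_a$ into $d^\alpha q_a$ and carry the new symbols $d$ to the front, yielding $d^\gamma h'q_awh$. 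Conversely, $d^\gamma h'q_awh$ is irreducible and hence is the normal form of $h'q_0whd^\gamma$; by the classification above, a normal form in which $q_a$ immediately follows $h'$ arises only when $\tm M$ halts within $\gamma$ steps, i.e.\ $q_0w \tmcomp b^\alpha q_awb^\beta$.

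\emph{The main obstacle} is the backward implication: one must confirm that no unintended reductions are possible from $h'u_1'q_1\hat v_1 h d^\gamma$, and in particular keep careful track of the behaviour at and beyond the halting state $q_a$ (where the Group~II rules intervene) and of how trailing blanks in the tape words interact with the stripped words $\hat v_i$ used in \fullref{Lemma}{lem:onestepcomputationandrewriting}. The forward implication, by contrast, is a short induction.
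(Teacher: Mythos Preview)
Your proposal is correct and follows the same approach as the paper's own proof, which is extremely terse: it simply says that the first part ``follows by induction using \fullref{Lemma}{lem:onestepcomputationandrewriting}'' and that the displayed equation is a particular case together with applications of rules \eqref{eq:9}, \eqref{eq:13}, and \eqref{eq:14}. Your forward induction and your treatment of the special case match this exactly; your additional normal-form analysis for the backward implication spells out what the paper leaves implicit in the phrase ``by induction'', and is a reasonable way to make that direction precise.
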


\begin{proof}
The first part follows by induction using
\fullref{Lemma}{lem:onestepcomputationandrewriting}. In
\eqref{eq:computationandrewriting}, the first reduction is simply a
particular case of the first part. The second reduction uses rules
\eqref{eq:9}, \eqref{eq:13}, and \eqref{eq:14}.
\end{proof}

We have now established the correspondence between rewriting using
$(A,\drel{R})$ and computation in $\tm{M}$. We can now prove the
connection with $o$-conjugacy:

\begin{lemma}
\label{lem:inltmiffoconj}
For $w \in \Sigma^+$, we have $w \in L(\tm{M})$ if and only $h'q_0wh
\sim_o h'q_awh$.
\end{lemma}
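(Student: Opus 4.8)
The plan is to prove both implications separately, using the rewriting-computation correspondence of \fullref{Lemma}{lem:computationandrewriting} together with the assumption (built into $\tm{M}$) that $w \in L(\tm{M})$ if and only if $q_0w \tmcomp b^\alpha q_a w b^\beta$ for some $\alpha,\beta$.

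\emph{The ``only if'' direction.} Suppose $w \in L(\tm{M})$. Then $q_0w \tmcomp (b')^\alpha q_a w b^\beta$ for suitable $\alpha,\beta$ (rewriting $b$ to $b'$ where needed via the convention that configurations carry primed/unprimed letters), and by \eqref{eq:computationandrewriting} there is $\gamma$ with $h'q_0whd^\gamma \reduces d^\gamma h'q_awh$ in $M$. Taking $g = d^\gamma$, this gives $h'q_0wh \cdot g =_M g \cdot h'q_awh$, which is one of the two conditions in the definition \eqref{eq:oconjdef} of $o$-conjugacy. For the symmetric condition I would use the Group~IV rules: rule \eqref{eq:17} rewrites $h'q_as_iz \to zh'q_0s_i$, and rules \eqref{eq:15}--\eqref{eq:16} move the marker $z$ leftward through the $s_i$'s and $h$, so that $h'q_awh\,z =_M z\,h'q_0wh$ — i.e.\ with $g' = z$ we get $h'q_awh\cdot g' =_M g'\cdot h'q_0wh$, which is the condition $h x = y h$ (with the roles as needed). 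Combining the two, $h'q_0wh \sim_o h'q_awh$. The point of rule \eqref{eq:17} is precisely to provide this ``return'' leg once the halting configuration has been reached.

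\emph{The ``if'' direction.} This is the harder part. Suppose $h'q_0wh \sim_o h'q_awh$, so in particular there exists $g \in A^*$ with $h'q_0wh\cdot g =_M g\cdot h'q_awh$. I would analyse what such an equality forces by reducing both sides to normal form under the complete rewriting system $(A,\drel{R})$. The strategy is: first argue that $g$ must contain only letters of a restricted kind — the homogeneity (length-preservation) fixes $|g|$, and one then inspects which letters can possibly appear, using the $\leq_\lenlex$-decreasing nature of the rules and the structure of normal forms, to conclude (after possibly first pushing all usable rules) that the relevant part of $g$ acts like a block $d^\gamma$ (for the forward leg) together with a $z$-marker for the backward leg. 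Concretely, the left-hand side $h'q_0wh\cdot d^\gamma$ can reduce only by simulating the machine (Group~I/III rules), and by \fullref{Lemma}{lem:computationandrewriting} it reaches a word of the form $d^{\gamma'}h'u'q v h$ for some reachable configuration $uqv$ of $\tm{M}$; for this to equal $g\cdot h'q_awh$ the state symbol appearing must be $q_a$, and since $\delta$ is undefined on $q_a$ the machine has genuinely halted, so by the assumptions on $\tm{M}$ we get $q_0w \tmcomp b^\alpha q_a w b^\beta$, i.e.\ $w \in L(\tm{M})$.

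\emph{Main obstacle.} The crux is controlling $g$ in the ``if'' direction: a priori $g$ is an arbitrary word over $A$, and one must show that no ``unexpected'' word $g$ can make the two sides equal in $M$ without the machine actually halting. I expect to handle this by a careful normal-form argument — showing that the only letters of $A$ that can appear to the left of (a copy of) $h'$ in a normal form of something of the shape $h'\cdots h\cdot g$ are $d$ (and the marker $z$), that the rules are essentially forced once such a configuration word is present, and hence that reducing $h'q_0wh\cdot g$ necessarily traces a computation of $\tm{M}$ from $q_0w$. The symbols $h$, $h'$, $d$, $z$ and the primed alphabet $\Sigma'$ were introduced precisely to make this control possible: $h$ and $h'$ delimit the configuration, $d$ counts the (bounded per step) length changes, $z$ triggers the reset rule \eqref{eq:17}, and the priming records ``cells to the left of the head''. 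Once this rigidity is established, both directions fall out of \fullref{Lemma}{lem:computationandrewriting} and the halting-problem assumptions on $\tm{M}$.
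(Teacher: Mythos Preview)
Your ``only if'' direction is correct and matches the paper exactly: the forward leg uses $g = d^\gamma$ via \fullref{Lemma}{lem:computationandrewriting}, and the backward leg uses a single $z$ together with the Group~IV rules to obtain $h'q_awh \cdot z =_M z \cdot h'q_0wh$.

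Your ``if'' direction has the right overall shape but is missing the one observation that makes the analysis work. The paper does \emph{not} reduce both sides and compare; instead it first replaces $g$ by its normal form and then notes that $g \cdot h'q_awh$ is \emph{already irreducible}: no rule applies inside $h'q_awh$ (no $d$, $z$, or $b'$ present, and $\delta$ is undefined on $q_a$), and no left-hand side in $\drel{R}$ contains $h'$ except as its first letter, so nothing can straddle the boundary between $g$ and $h'q_awh$. This converts the equality $h'q_0wh \cdot g =_M g \cdot h'q_awh$ into a \emph{directed} reduction $h'q_0wh \cdot g \reduces g \cdot h'q_awh$, after which one argues letter by letter: since $h'q_0wh$ is itself irreducible, the first rewrite must involve the symbol $h$ together with the first letter of $g$, and in every rule of $\drel{R}$ the symbol $h$ is followed only by $d$ or $z$. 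Iterating, one gets $g = d^\alpha$ or $g = d^\alpha z u_2$; in the first case \fullref{Lemma}{lem:computationandrewriting} immediately yields a halting computation, and in the second the $z$ can cross the state symbol only via rule~\eqref{eq:17}, which forces that state to be $q_a$.

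Two small corrections to your sketch: homogeneity does not ``fix $|g|$'' (the conjugator can have any length; what is fixed is $|x| = |y|$, which is automatic here), and $g$ need not contain both $d$'s and a $z$ --- the paper's case split is precisely on whether a $z$ is present. Once you insert the irreducibility observation above, your outline becomes the paper's proof.
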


\begin{proof}
Suppose $w \in L(\tm{M})$. Then $q_0w \tmcomp q_aw$. So by
\fullref{Lemma}{lem:computationandrewriting}, $h'q_0whd^\alpha
\reduces d^\alpha h'q_awh$ for some natural $\alpha$. Furthermore,
using Group~IV rules, $h'q_awhz \reduces zh'q_0wh$. Hence $h'q_0wh
\sim_o h'q_awh$.

Suppose now that $h'q_0wh \sim_o h'q_awh$. Then there exists a word $u
\in \Sigma^*$ such that $h'q_0whu \thue uh'q_awh$. Assume without loss
that $u$ is irreducible. First note that $uh'q_awh$ is irreducible,
since no rule in $\drel{R}$ can be applied to $h'q_awh$ since no
symbols $d$, $z$, or $b'$ are present, and there is no rule whose
left-hand side contains $h'$ except at the start. Hence $h'q_0whu
\reduces uh'q_awh$.

Suppose first $u$ contains a symbol $z$. Then $u$ factors as $u_1zu_2$
where $u_1$ does not contain $z$. The word $h'q_0wh$ is irreducible
since no symbols symbols $d$, $z$, or $b'$ are present, so rewriting
$h'q_0whu$ must begin with a rewriting rule that includes the
distinguished symbol $h$ and a non-empty prefix of $u$. In rules in
$\drel{R}$, the symbol $h$ is only followed by a symbol $d$ or
$z$. Since $u_1$ does not include $z$, the first symbol of $u_1$ must
therefore be $d$. As in
\fullref{Lemma}{lem:onestepcomputationandrewriting}, this $d$ will
move to the left, where it will either disappear, or one or two
symbols $d$ will emerge to the left of the $h'$. The $h$ remains next
to the remainder of $u_1$, so by the same reasoning the next symbol of
$u_1$ must also be $d$. Repeating this reasoning, we see $u_1 =
d^\alpha$ and $h'q_0whu = h'q_0whd^\alpha zu_2 \reduces d^\beta
h'\cdots q_c \cdots hzu_2$ for some non-negative integers $\alpha$ and
$\beta$. Notice that $d^\beta h'\cdots q_c \cdots h$ is irreducible,
so further rewriting must use a rule of type \eqref{eq:15} followed by
rules of type \eqref{eq:16} to move the $z$ to the
left until it is one symbol to the right of $q_c$. To apply the rule
\eqref{eq:17}, which is necessary if we want to obtain $uh'q_0wh$,
where all the symbols $z$ are to the left of the symbol from $Q$, is
only possible if $q_c = q_a$. By
\fullref{Lemma}{lem:computationandrewriting}, this implies that
reading $w$ causes $\tm{M}$ to enter the halting state $q_a$; and
hence $w$ is accepted by $L(\tm{M})$.

If $u$ does not contain a symbol $z$, then by the same reasoning as in
the last paragraph, $u=d^\alpha$ and so $hq_0wh'd^\alpha \reduces
d^\alpha hq_awh'$. So $q_0w \tmcomp q_aw$ by
\fullref{Lemma}{lem:computationandrewriting} and so $w \in L(\tm{M})$.
\end{proof}

By \fullref{Lemma}{lem:inltmiffoconj}, the problem of whether $\tm{M}$
halts on a given input reduces to the $o$-conjugacy problem for
$M$. Hence the $o$-conjugacy problem for $M$ is undecidable.
\end{proof}

Since $\sim_p^*$ is decidable for homogeneous monoids and $\sim_o$ is
undecidable in general for homogeneous monoids by
\fullref{Theorem}{thm:oconjundechomogeneous}, we have (very
indirectly) proved the following corollary:

\begin{corollary}
In the class of homogeneous monoids, $\sim_p^*$ and $\sim_o$ do not
coincide in general.
\end{corollary}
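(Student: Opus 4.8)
The plan is to argue by contradiction, exploiting the asymmetry in decidability status between the two relations that has been established earlier in this section. Suppose, for \emph{reductio ad absurdum}, that ${\sim_p^*}$ and ${\sim_o}$ coincide in every homogeneous monoid. In particular, they would coincide in every \emph{finitely presented} homogeneous monoid.

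First I would invoke the result from \fullref{Subsection}{subsec:homogeneous} (really the discussion immediately preceding this corollary, on the decidability of $\sim_p^*$-conjugacy): for a finitely generated homogeneous monoid $M = \pres{A}{\drel{R}}$ and any $x \in A^*$, the set $P_x$ of all words $\sim_p^*$-conjugate to $x$ is effectively computable, because it stabilises inside the finite set $A^{|x|}$ after finitely many steps of the iterative construction of the $Y_k$. Hence the $\sim_p^*$-conjugacy problem is uniformly soluble across the class of finitely presented homogeneous monoids. Under our contradiction hypothesis, this would make the $\sim_o$-conjugacy problem soluble for every finitely presented homogeneous monoid as well: to decide whether $x \sim_o y$, compute $P_x$ and test membership of $y$.

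This directly contradicts \fullref{Theorem}{thm:oconjundechomogeneous}, which exhibits a finitely presented homogeneous monoid $M = \pres{A}{\drel{R}}$ in which no algorithm decides $o$-conjugacy. Therefore the contradiction hypothesis is false, and there must exist a homogeneous monoid in which ${\sim_p^*} \neq {\sim_o}$; indeed the monoid $M$ of \fullref{Theorem}{thm:oconjundechomogeneous} is itself such a monoid. I do not expect any real obstacle here: the argument is a routine reduction of decidability. The only point worth flagging is that it is genuinely non-constructive — it shows the two relations differ in $M$, but, as the phrase "very indirectly" in the surrounding text signals, it does not hand us an explicit pair $(x,y)$ with $x \sim_o y$ but $x \not\sim_p^* y$; producing such a pair explicitly would require understanding an undecidable set, which is precisely what we cannot do.
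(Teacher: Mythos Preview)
Your proposal is correct and follows exactly the paper's approach: the corollary is stated immediately after noting that $\sim_p^*$ is decidable for finitely presented homogeneous monoids while $\sim_o$ is undecidable for the monoid of \fullref{Theorem}{thm:oconjundechomogeneous}, and the paper explicitly flags this as a ``very indirect'' proof. Your write-up simply spells out that one-line observation as a formal contradiction argument, which is precisely what is intended.
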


\subsection{Multihomogeneous monoids}

\begin{theorem}
\label{thm:oconjundecmultihomogeneous}
There exists a finitely presented multihomogeneous monoid $N =
\pres{X}{\drel{S}}$ in which the $o$-conjugacy problem is
undecidable. That is, there is no algorithm that takes as input
two words over $X$ and decides whether they are $o$-related.
\end{theorem}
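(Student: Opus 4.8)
The plan is to transfer the undecidability result from the homogeneous monoid $M$ of \fullref{Theorem}{thm:oconjundechomogeneous} to a multihomogeneous monoid, using the technique developed with Gray in \cite{cgm_homogeneous}. The idea behind that technique is that any homogeneous presentation can be ``encoded'' into a multihomogeneous one by replacing each generator $a \in A$ with a carefully chosen block of new generators, in such a way that the word problem, and more importantly the combinatorial structure needed to detect $o$-conjugacy, is preserved. So the first step is to recall precisely the statement from \cite{cgm_homogeneous}: it should say roughly that given a finite homogeneous presentation $\pres{A}{\drel{R}}$, one can effectively construct a finite multihomogeneous presentation $\pres{X}{\drel{S}}$ together with an injective map $\phi \colon A^* \to X^*$ (a monoid embedding onto its image when restricted to the relevant submonoid) such that $u =_M v$ if and only if $\phi(u) =_N \phi(v)$, and such that the image of $\phi$ is a recognizable subset of $X^*$ that behaves well with respect to factorizations.

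Second, I would lift the $o$-conjugacy statement along $\phi$. The key claim to establish is: for $x, y \in A^*$, we have $x \sim_o y$ in $M$ if and only if $\phi(x) \sim_o \phi(y)$ in $N$. The forward direction is easy: if $xg =_M gy$ and $hx =_M yh$, then applying $\phi$ (and using that $\phi$ is a homomorphism onto the submonoid generated by $\phi(A)$, together with $\phi(u) =_N \phi(v) \iff u =_M v$) gives $\phi(x)\phi(g) =_N \phi(g)\phi(y)$ and similarly on the other side, so $\phi(x) \sim_o \phi(y)$ witnessed by conjugators $\phi(g), \phi(h)$. The reverse direction is the delicate part: given an arbitrary conjugator $g' \in X^*$ with $\phi(x) g' =_N g' \phi(y)$, we need to argue that $g'$ can be taken to lie in the image of $\phi$ (or at least that its existence forces an analogous conjugator back in $M$). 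This is where the structure of the encoding from \cite{cgm_homogeneous} --- in particular that the complete rewriting system for $N$ keeps the ``block structure'' synchronized, so that any word equal to something of the form $\phi(w)$ must itself decompose into blocks --- does the work; one shows that in the equation $\phi(x) g' =_N g' \phi(y)$, since the left side begins with the block-word $\phi(x)$, normal-form considerations force $g'$ to begin with a complete sequence of blocks, hence $g' \in \phi(A^*) \cdot (\text{tail})$, and an induction peels off the blocks to conclude $g' =_N \phi(g)$ for some $g \in A^*$.

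Third, with the equivalence $x \sim_o y \iff \phi(x) \sim_o \phi(y)$ in hand, undecidability transfers immediately: the map $x \mapsto \phi(x)$ is computable, and if $o$-conjugacy were decidable in $N$ we could decide it in $M$ by composing with $\phi$, contradicting \fullref{Theorem}{thm:oconjundechomogeneous}. So $N = \pres{X}{\drel{S}}$ is the desired finitely presented multihomogeneous monoid. I expect the main obstacle to be precisely the reverse direction of the $o$-conjugacy lifting --- controlling arbitrary conjugators in $N$ and showing they descend to $M$ --- since the forward direction and the final reduction are formal. Depending on how cleanly \cite{cgm_homogeneous} packages its embedding (for instance, whether it already records that $\phi(A^*)$ is ``closed under conjugators'' or provides a complete rewriting system with the synchronization property), this step may be short or may require reproducing a chunk of that paper's argument about normal forms respecting the block decomposition.
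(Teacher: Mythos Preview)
Your proposal is correct and follows essentially the same route as the paper: encode the homogeneous monoid $M$ into a multihomogeneous $N$ via the block map $\phi$ from \cite{cgm_homogeneous}, and show that $o$-conjugacy of $\phi(x)$ and $\phi(y)$ in $N$ descends to $o$-conjugacy of $x$ and $y$ in $M$ by proving any conjugator $g' \in X^*$ must lie in $\im\phi$. The only cosmetic difference is that the paper carries out the ``conjugators lie in $\im\phi$'' step by an explicit prefix-extension argument (iteratively lengthening the prefix of $g'\phi(y)$ known to lie in $\im\phi$, then using that the marker $x^2$ occurs only at block starts) rather than invoking normal forms of a complete rewriting system, but this is exactly the synchronization property you anticipated.
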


\begin{proof}
Let $M = \pres{A}{\drel{R}}$ be a finitely presented homogeneous
monoid with undecidable $o$-conjugacy problem; such a monoid
exists by \fullref{Theorem}{thm:oconjundechomogeneous}. Suppose $A =
\{a_1,\ldots,a_n\}$. Let $X = \{x,y\}$. Define a map
\[
\phi : A^* \to X^*;\qquad a_i \mapsto x^2y^ixy^{n-i+1}.
\]
Let
\[
\drel{S} = \drel{R}\phi = \bigl\{(u\phi,v\phi) : (u,v) \in \drel{R}\bigr\}.
\]
Let $N = \pres{X}{\drel{S}}$. Then $\pres{X}{\drel{S}}$ is a
multihomogeneous presentation, and thus $N$ is a multihomogeneous
monoid: the proof is not difficult; see \cite[Proposition~5.8]{cgm_homogeneous} for
details. Furthermore, it is easy to prove that $\phi$ embeds $M$ into $N$
\cite[Proposition~5.9]{cgm_homogeneous}.

Suppose $p,q \in M$ and $w \in N$ are such that $(p\phi)w =_N
w(q\phi)$. Since $(p\phi)w$ contains a prefix of length $|p\phi|$ that
is in $\im\phi$, a prefix of $w(q\phi)$ of length $|p\phi|$ is in
$\im\phi$ since application of relations in $\drel{S}$ preserves
subwords that lie in $\im\phi$; see \cite[Proof of Proposition~5.9]{cgm_homogeneous} for further
details. Hence $(p\phi)w$ contains a prefix of length $2|p\phi|$ in
$\im\phi$, and so $w(q\phi)$ contains a prefix of length $2|p\phi|$ in
$\im\phi$ by the same lemma. Iterating this process, eventually we see
that $w(q\phi)$ contains a prefix of length at least $|w|+2$ that lies
in $\im\phi$. So suppose $v = v_1v_2\cdots v_m \in A^*$ (where $v_i
\in A$) is such that $v\phi$ is a prefix of $w(q\phi)$ of length at
least $|w|+2$. The subword $q\phi$ begins with $x^2$, and so
$w(q\phi)$ beings with $wx^2$. Since $|v\phi| \geq |w|+2$, this $x^2$
must be in the prefix $v\phi$ of $w(q\phi)$. Subwords $x^2$ only occur
in $v\phi$ at the start of the subwords $v_i\phi$, and so $w =
(v_1\cdots v_{m'})\phi$ for some $m' < m$. Thus $(pv_1\cdots
v_{m'})\phi =_N (v_1\cdots v_{m'}q)\phi$. Since $\phi$ is an
embedding, $pv_1\cdots v_{m'} =_M v_1\cdots v_{m'}q$.

Therefore, $o$-conjugacy on $\im\phi$ is simply $o$-conjugacy in
$N$ restricted to $\im\phi$. Since $o$-conjugacy is undecidable for
$M$ and thus for $\im\phi$, it is therefore undecidable for the
multihomogeneous monoid $N$.
\end{proof}


\bibliography{automaticsemigroups,languages,presentations,rewriting,semigroups,\jobname,c_publications}

\newcommand{\etalchar}[1]{$^{#1}$}
\begin{thebibliography}{CEK{\etalchar{+}}01}

\bibitem[AKM14]{araujo_conjugation}
J. Ara{\'u}jo, J. Konieczny, \& A. Malheiro.
\newblock `Conjugation in semigroups'.
\newblock {\em J. Algebra}, 403 (2014), pp. 93--134.
\newblock {\sc doi:} \href {http://dx.doi.org/10.1016/j.jalgebra.2013.12.025} {{10.1016/j.jalgebra.2013.12.025}}.

\bibitem[BO93]{book_srs}
R.~V. Book \& F. Otto.
\newblock {\em {String-Rewriting Systems}}.
\newblock Texts and Monographs in Computer Science. Springer-Verlag, New York, 1993.

\bibitem[CEK{\etalchar{+}}01]{cassaigne_chinese}
J. Cassaigne, M. Espie, D. Krob, J.~C. Novelli, \& F. Hivert.
\newblock `The {C}hinese monoid'.
\newblock {\em Internat. J. Algebra Comput.}, 11, no.~3 (2001), pp. 301--334.
\newblock {\sc doi:} \href {http://dx.doi.org/10.1142/S0218196701000425} {{10.1142/S0218196701000425}}.

\bibitem[CGM]{cgm_homogeneous}
A.~J. Cain, R.~D. Gray, \& A. Malheiro.
\newblock `On finite complete rewriting systems, finite derivation                   type, and automaticity for homogeneous monoids'.
\newblock arXiv:~\href {http://arxiv.org/abs/1407.7428} {{1407.7428}}.

\bibitem[CGM15]{cgm_chineseetc}
A.~J. Cain, R.~D. Gray, \& A. Malheiro.
\newblock `Rewriting systems and biautomatic structures                   for {C}hinese, hypoplactic, and sylvester monoids'.
\newblock {\em Internat. J. Algebra Comput.}, 25, no.~1-2 (2015).
\newblock {\sc doi:} \href {http://dx.doi.org/10.1142/S0218196715400044} {{10.1142/S0218196715400044}}.

\bibitem[Cho93]{choffrut_conjugacy}
C. Choffrut.
\newblock `Conjugacy in free inverse monoids'.
\newblock {\em Internat. J. Algebra Comput.}, 3, no.~2 (1993), pp. 169--188.
\newblock {\sc doi:} \href {http://dx.doi.org/10.1142/S0218196793000135} {{10.1142/S0218196793000135}}.

\bibitem[HNT05]{hivert_algebra}
F. Hivert, J.~C. Novelli, \& J.~Y. Thibon.
\newblock `The algebra of binary search trees'.
\newblock {\em Theoret. Comput. Sci.}, 339, no.~1 (2005), pp. 129--165.
\newblock {\sc doi:} \href {http://dx.doi.org/10.1016/j.tcs.2005.01.012} {{10.1016/j.tcs.2005.01.012}}.

\bibitem[HU79]{hopcroft_automata}
J.~E. Hopcroft \& J.~D. Ullman.
\newblock {\em {Introduction to Automata Theory, Languages, and Computation}}.
\newblock Addison--Wesley Publishing Co., Reading, Mass., 1979.

\bibitem[KM07]{kudryavtseva_conjugation}
G. Kudryavtseva \& V. Mazorchuk.
\newblock `On conjugation in some transformation and {B}rauer-type               semigroups'.
\newblock {\em Publ. Math. Debrecen}, 70, no.~1-2 (2007), pp. 19--43.

\bibitem[KM09]{kudryavtseva_three}
G. Kudryavtseva \& V. Mazorchuk.
\newblock `On three approaches to conjugacy in semigroups'.
\newblock {\em Semigroup Forum}, 78, no.~1 (2009), pp. 14--20.
\newblock {\sc doi:} \href {http://dx.doi.org/10.1007/s00233-008-9047-7} {{10.1007/s00233-008-9047-7}}.

\bibitem[Lal79]{lallement_semigroups}
G. Lallement.
\newblock {\em Semigroups and Combinatorial Applications}.
\newblock John Wiley \& Sons, New York-Chichester-Brisbane, 1979.

\bibitem[Lot02]{lothaire_algebraic}
M. Lothaire.
\newblock {\em Algebraic combinatorics on words}, vol.~90 of {\em Encyclopedia of Mathematics and its Applications}.
\newblock Cambridge University Press, Cambridge, 2002.

\bibitem[LR98]{loday_hopf}
J.~L. Loday \& M.~O. Ronco.
\newblock `Hopf algebra of the planar binary trees'.
\newblock {\em Adv. Math.}, 139, no.~2 (1998), pp. 293--309.
\newblock {\sc doi:} \href {http://dx.doi.org/10.1006/aima.1998.1759} {{10.1006/aima.1998.1759}}.

\bibitem[LS81]{lascoux_plaxique}
A. Lascoux \& M.~P. Sch{\"u}tzenberger.
\newblock `Le mono\"\i de plaxique'.
\newblock In {\em Noncommutative structures in algebra and geometric               combinatorics ({N}aples, 1978)}, vol. 109 of {\em Quad. ``Ricerca Sci.''}, pp. 129--156. CNR, Rome, 1981.

\bibitem[NO85]{narendran_complexity}
P. Narendran \& F. Otto.
\newblock `Complexity results on the conjugacy problem for monoids'.
\newblock {\em Theoret. Comput. Sci.}, 35, no.~2-3 (1985), pp. 227--243.
\newblock {\sc doi:} \href {http://dx.doi.org/10.1016/0304-3975(85)90016-7} {{10.1016/0304-3975(85)90016-7}}.

\bibitem[NO86]{narendran_problems}
P. Narendran \& F. Otto.
\newblock `The problems of cyclic equality and conjugacy for finite                   complete rewriting systems'.
\newblock {\em Theoret. Comput. Sci.}, 47, no.~1 (1986), pp. 27--38.
\newblock {\sc doi:} \href {http://dx.doi.org/10.1016/0304-3975(86)90131-3} {{10.1016/0304-3975(86)90131-3}}.

\bibitem[Ott84]{otto_conjugacy}
F. Otto.
\newblock `Conjugacy in monoids with a special {C}hurch-{R}osser                   presentation is decidable'.
\newblock {\em Semigroup Forum}, 29, no.~1-2 (1984), pp. 223--240.
\newblock {\sc doi:} \href {http://dx.doi.org/10.1007/BF02573327} {{10.1007/BF02573327}}.

\bibitem[Sil96]{silva_conjugacy}
P.~V. Silva.
\newblock `Conjugacy and transposition for inverse monoid presentations'.
\newblock {\em Internat. J. Algebra Comput.}, 6, no.~5 (1996), pp. 607--622.
\newblock {\sc doi:} \href {http://dx.doi.org/10.1142/S0218196796000349} {{10.1142/S0218196796000349}}.

\end{thebibliography}
\bibliographystyle{alphaabbrv}

\end{document}